\newtheorem{theorem}{Theorem}[section]
\newtheorem{lemma}{Lemma}[section]
\newtheorem{cor}{Corollary}[section]
\newcommand{\Q}{{\mathbb Q}}
\newcommand{\Z}{{\mathbb Z}}
\newcommand{\N}{{\mathbb N}}
\newcommand{\R}{{\mathbb R}}
\newcommand{\ml}{{\mathcal L}}
\title[Integers represented by Lucas sequences]{Integers represented by Lucas sequences}
\subjclass[2020]{11B39, 11E25}
\keywords{Lucas sequences, integers represented by forms, Fibonacci polynomials}
\date{\today}
\author{Lajos Hajdu}
\address{Institute of Mathematics, University of Debrecen,\newline
\indent P. O. Box 400, H-4002 Debrecen, Hungary \newline
\indent and HUN-REN DE Equations, Functions, Curves and their Applications Research Group}
\email{hajdul@science.unideb.hu}
\author{Rob Tijdeman}
\address{Mathematical Institute, Leiden University,\newline
\indent Postbus 9512, 2300 RA Leiden, The Netherlands}
\email{tijdeman@ziggo.nl}
\thanks{Research supported in part by the HUN-REN Hungarian Research Network and by the NKFIH grant ANN 130909.}
\begin{document}

\begin{abstract}
In this paper we study the sets of integers which are $n$-th terms of Lucas sequences. We establish lower- and upper bounds for the size of these sets. These bounds are sharp for $n$ sufficiently large. We also develop bounds on the growth order of the terms of Lucas sequences that are independent of the parameters of the sequence, which is a new feature.
\end{abstract}

\maketitle

\section{Introduction}

In this paper we study sets of integers which are $n$-th terms of Lucas sequences for some $n\geq 0$. For integers $A,B$, the sequence $U=(U_n)_{n=0}^{\infty}$ with $U_0=0$, $U_1=1$ satisfying the binary recursive relation
$$
U_n=AU_{n-1}-BU_{n-2}\ \ \ (n\geq 2),
$$
is called a Lucas sequence. Note that the Fibonacci sequence is a Lucas sequence, corresponding to the choice $(A,B)=(1,-1)$. Lucas sequences are well studied in the literature, see for example \cite{bw} and the references there.

Write $f(x)=x^2-Ax+B$ for the characteristic polynomial of $U$, and let $\alpha,\beta$ be its roots. Throughout the paper, unless stated otherwise, we shall assume that the sequence is non-degenerate, that is, $AB\neq 0$ and $\alpha/\beta$ is not a root of unity. (It is easy to deal with the excluded cases; see Lemma \ref{lemdeg}.) Without loss of generality we assume that $|\alpha|\geq |\beta|$ throughout the paper. We have
\begin{equation}
\label{exprun}
U_n = \frac {\alpha^n - \beta^n}{\alpha - \beta}\ \ \ (n\geq 0).
\end{equation}
If $A^2>4B$ then $\alpha$ and $\beta$ are real and we say that we are in the real case. If $A^2<4B$, then $\alpha$ and $\beta$ are non-real complex numbers which are conjugates, and we say that we are in the non-real case.

We start with a new type bound for the growth of Lucas sequences. It has been long known by a classical result of Stewart \cite{st76} that these sequences grow exponentially, cf. Lemma 5 of \cite{ss}. However, the earlier bounds depend on $A$ and $B$. We show that $|U_n| \geq \frac 12 (\frac{1+\sqrt{5}}{2})^{n-2}$ in the real case and $|U_n|\geq (\sqrt{2})^{n-c(\log n)^2}$ in the non-real case for $n>1$, independently of the chosen Lucas sequence. Here $c$ is an absolute constant, which we give explicitly. The proof in the real case is elementary, while in the non-real case it is based on an estimate of linear forms in two logarithms.

Next we give sharp upper and lower bounds for the number of Lucas sequences with $|\alpha| \leq t$.

Thereafter for $n,N\in\N$ we study the sets
\begin{multline*}
\ml_n(N)=\{x\in\Z:0 \leq x\leq N\ \text{and}\ x=|U_n|\\ \text{for some non-degenerate Lucas sequence}\ U\}
\end{multline*}
and
$$
\ml_{\geq n}(N)=\bigcup_{m=n}^\infty \ml_m(N).
$$
We derive sharp, explicit upper- and lower bounds for the growth order of $|\ml_n(N)|$ and $|\ml_{\geq n}(N)|$. 
In the proofs we need to combine several tools, including our new bounds on the growth of Lucas sequences, extensions of theorems of Erd\H{o}s and Mahler \cite{em} and Lewis and Mahler \cite{lm} concerning representability of integers by binary forms $G(x,y)$ to representations of the type $G(x^2,y)=k$, and certain properties of the Fibonacci polynomials. For the introduction of these polynomials, observe that the first few terms of $U$ are given by
$$
0,\ 1,\ A,\ A^2-B,\ A^3-2AB,\ A^4-3A^2B+B^2,\ A^5-4A^3B+3AB^2. 
$$
We define $F_n(x,y)$ by $F_0(x,y)=0$, $F_1(x,y)=1$ and
$$
F_n(x,y) = xF_{n-1}(x,y) - yF_{n-2}(x,y)\ \ \ (n\geq 2).
$$
Polynomials $F_n(A,B)$ correspond with Lucas sequences, polynomials $F_n(x,-1)$ are the Fibonacci polynomials, polynomials $F_n(2x,1)$ are the Chebyshev polynomials of the second kind.

The structure of the paper is the following. In Section \ref{mres} we formulate our principal results. In Section \ref{groluc} we prove Theorem \ref{thm21} which provides lower bounds on the growth of Lucas-sequences, in Section \ref{numLuc} we prove Theorem \ref{lemup1} which gives precise bounds for the number of Lucas sequences with bounded $|\alpha|$, in Section \ref{lucnum} we prove Theorem \ref{thm22} on upper bounds for the sizes of the sets $\ml_n$ and $\ml_{\geq n}$, and in Section \ref{lower} we prove Theorems \ref{thm23} and \ref{thm24} on lower bounds for them. These upper and lower bounds differ by a multiplicative constant depending only on $n$ for $n \geq 7$ odd, and by a lower order factor for $n=5$ and for $n\geq 6$ even. 

By $c_1, c_2, c_3, \dots$ we denote effectively computable constants depending only on $n$.


\section{Main results}
\label{mres}

Lemma 5 of \cite{ss} (taken from \cite{st76}) states that there exist positive constants $N_0$ and $C_0$ depending on $A,B$ such that for $n>N_0$ we have $|U_n|>|\alpha|^{n-C_0\log n}$. Our first theorem in combination with Remark 1 and Remark 2 implies that $N_0$ and $C_0$ can be chosen independently of $A$ and $B$.

\begin{theorem}
\label{thm21}
For $n\geq 2$ we have in the real case
$$
|U_n| \geq \frac{|\alpha|^{n-2}}{2}\ \ \ \text{if}\ B<0
$$
$$
|U_n| \geq |\alpha|^{n-1} \ \ \text{if}\ A^2>4B>0
$$
and in the non-real case
$$
|U_n|\geq
\begin{cases}
\frac{1}{4}e^{-250(\log n)^2}|\alpha|^{n-2},&\text{if}\ n>5\cdot 10^{8},\\
\frac{1}{4}e^{-100000}|\alpha|^{n-2},&\text{if}\ n\leq 5\cdot 10^{8}
\end{cases}
$$
for $B\leq 535$, and
$$
|U_n|\geq
\begin{cases}
\frac{1}{4}|\alpha|^{n-2-88(\log n)^2},&\text{if}\ n>2.1\cdot 10^{8},\\
\frac{1}{4}|\alpha|^{n-31710},&\text{if}\ n\leq 2.1\cdot 10^{8}
\end{cases}
$$
for $B\geq 536$.
\end{theorem}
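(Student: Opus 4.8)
The plan is to treat the real case by an elementary manipulation of \eqref{exprun} and the non-real case by reducing the problem to an explicit lower bound for a linear form in two logarithms.

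For the real case I would first note that replacing $(\alpha,\beta)$ by $(-\alpha,-\beta)$ changes $U_n$ only by a sign and leaves $A^2,B$ fixed, so I may assume $\alpha>0$; non-degeneracy then forces $|\alpha|>|\beta|$. Writing $r=\beta/\alpha\in(-1,1)$, the formula \eqref{exprun} reads $U_n=\alpha^{n-1}\sum_{i=0}^{n-1}r^i$. If $A^2>4B>0$ then $\beta>0$ and $r\in(0,1)$, so the sum is $\ge 1$ and $|U_n|\ge|\alpha|^{n-1}$ immediately. If $B<0$ then $\beta<0$, $r\in(-1,0)$, and $A=\alpha+\beta=|\alpha|-|\beta|$ is a positive integer, whence $|\alpha|>1$. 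Splitting according to the parity of $n$ and using $\sum_{i=0}^{n-1}\rho^i\ge 1+\rho$ with $\rho=|r|$, I would get $|U_n|\ge\alpha^{n-1}(1-\rho)=|\alpha|^{n-2}(|\alpha|-|\beta|)\ge|\alpha|^{n-2}$ for even $n$, and $|U_n|\ge\tfrac12|\alpha|^{n-1}\ge\tfrac12|\alpha|^{n-2}$ for odd $n$; in both cases this yields the claimed bound $\tfrac12|\alpha|^{n-2}$.

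For the non-real case I would use $\beta=\bar\alpha$, $|\alpha|=\sqrt{B}\ge1$, and write $\alpha=\sqrt{B}\,e^{i\theta}$, so that \eqref{exprun} gives $|U_n|=|\alpha|^{n-1}|\sin n\theta|/|\sin\theta|\ge|\alpha|^{n-2}|\sin n\theta|$ (using $|\sin\theta|\le1$ and $|\alpha|\ge1$). With $\gamma=\alpha/\beta$, a number of modulus $1$ which is not a root of unity by non-degeneracy, one has $|\sin n\theta|=\tfrac12|\gamma^n-1|$; choosing $m$ to be the nearest integer to $n\theta/\pi$ and setting $\Lambda=n\log\gamma-2m\log(-1)=2i(n\theta-m\pi)$, the bound $|n\theta-m\pi|\le\pi/2$ and Jordan's inequality give $|\gamma^n-1|\ge\frac{2}{\pi}|\Lambda|$. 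Thus the whole problem reduces to an explicit lower bound for $|\Lambda|$, a linear form in the two logarithms $\log\gamma$ and $\log(-1)$, with integer coefficients $n$ and $-2m$ of size $O(n)$. Since $\gamma+\gamma^{-1}=(A^2-2B)/B$, the number $\gamma$ is a root of $Bx^2-(A^2-2B)x+B$, so $h(\gamma)\le\tfrac12\log B+O(1)$; feeding this together with $h(-1)=0$ into an explicit two-log estimate (of Laurent--Mignotte--Nesterenko / Gouillon type) yields $\log|\Lambda|\ge -C(\log B)\big(\max\{\log n,\lambda_0\}\big)^2$ for an absolute constant $C$ and a numerical floor $\lambda_0$. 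The floor produces the split at $n\approx e^{\lambda_0}$ (numerically $5\cdot10^8$ and $2.1\cdot10^8$): for smaller $n$ the bound is the constant coming from $\lambda_0$, for larger $n$ it is the $(\log n)^2$ bound. Finally I would present the outcome in two ways: for $B\le535$ the factor $\log B$ is bounded and is absorbed into the constant, giving $e^{-250(\log n)^2}$; for $B\ge536$ I would instead write $\log B=2\log|\alpha|$ and fold the loss into the exponent of $|\alpha|$, giving $|\alpha|^{-88(\log n)^2}$. Combining with the factors $\tfrac12\cdot\frac{2}{\pi}$ and the leading $|\alpha|^{n-2}$, and reducing the numerical constant to $\tfrac14$, then gives the four stated inequalities.

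The main obstacle lies entirely in the non-real case: choosing the right explicit two-log theorem and tracking every constant — the height $h(\gamma)$, the parameter $b'$ governing the $\max$, the degree of the field, and the numerical floor $\lambda_0$ — carefully enough that the absolute constants $250$, $88$, $100000$, $31710$ and the thresholds $5\cdot10^8$, $2.1\cdot10^8$, $535$ emerge exactly as stated. By contrast, the entire real case and the reduction step $|\gamma^n-1|\ge\frac{2}{\pi}|\Lambda|$ are routine.
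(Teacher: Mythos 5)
Your proposal follows essentially the same route as the paper: an elementary manipulation of \eqref{exprun} in the real case (your parity split for $B<0$ replaces the paper's separate treatment of $(A,B)=\pm(1,-1)$ versus $\alpha\geq 2$, but yields the same bound), and in the non-real case the same reduction of $|U_n|$ to a lower bound for $|(\alpha/\beta)^n-1|$ via Laurent's two-logarithm estimate with $h(\beta/\alpha)=\tfrac12\log B$, with the identical $B\leq 535$ versus $B\geq 536$ dichotomy governing whether $\log B$ is absorbed into the constant or folded into the exponent of $|\alpha|$. The only part you leave unexecuted is the numerical bookkeeping that produces the specific constants $250$, $88$, $100000$, $31710$ and the thresholds, which is exactly where the paper's remaining work lies, but the strategy and all structural steps match.
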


\noindent {\bf Remark 1.}  We have in the real case
$$
|\alpha| = \frac{1+\sqrt{5}}{2} \ \ \text{if} \ (A,B) = \pm (1,-1), \ \ |\alpha| \geq 2\ \ \ \text{if} \ (A,B)\neq \pm (1,-1),
$$
and in the non-real case, since $B=1$ is excluded,
$$
|\alpha| = \sqrt{B} \geq \sqrt{2}.
$$ 

\noindent{\bf Remark 2.} In the proof of Theorem \ref{thm21} in the non-real case we use a result of Laurent \cite{la} on linear forms in two logarithms. This causes the appearance of $(\log n)^2$ in the exponent, but with a rather small coefficient. If we use the Baker-type result of Matveev \cite{ma} instead, then the $(\log n)^2$ is replaced by $\log n$, however, with much larger constants as coefficients. The following argument, that we owe to an unknown colleague, shows that it is not true that there is an absolute constant $c>0$ such that $|U_n| > c |\alpha|^n$ for all $n$ in the non-real case. Write $\frac {\beta}{\alpha}=e^{2 \pi i \theta}$. Let $\frac mn$ be a convergent of $\theta$. Then  $|n \theta - m| < \frac 1n$. Hence $\left| \left(\frac{\beta}{\alpha}\right)^n-1 \right| \ll \frac 1n$. Thus $|\alpha^n - \beta^n| \ll \frac{|\alpha|^n}{n}$ for infinitely many $n$.

\vskip.2cm

The following result provides an upper and a lower bound for the number of Lucas sequences with bounded roots. It shows that this number is $4t^3 + O(t^2)$.
\begin{theorem}
\label{lemup1}
Let $t \in {\R}_{\geq 2}$. Then the number of non-degenerate Lucas sequences with $|\alpha| \leq t$ is at most $4t^3-t^2+7t$ and at least $4t^3-10t^2-29t$.
\end{theorem}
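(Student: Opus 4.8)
The plan is to translate the counting problem into a lattice-point count over an explicit planar region and then remove the degenerate pairs. A Lucas sequence is determined by its pair $(A,B)\in\Z^2$ (indeed $U_2=A$ and $U_2^2-U_3=B$), so it suffices to count admissible integer pairs. First I would identify the region. Since $|\alpha|\ge|\beta|$, the condition $|\alpha|\le t$ says that both roots of $f(x)=x^2-Ax+B$ lie in the closed disc of radius $t$. Splitting into the real and non-real cases and using $f(\pm t)\ge 0$ together with $\alpha\beta=B$, one checks that this happens exactly when
$$
|A|\le 2t\qquad\text{and}\qquad t|A|-t^2\le B\le t^2 .
$$
Geometrically this is the triangle $R_t$ with vertices $(\pm 2t,t^2)$ and $(0,-t^2)$, whose two slanted sides are the lines $B=\pm tA-t^2$, tangent to the discriminant parabola $B=A^2/4$ at $(\pm 2t,t^2)$, and whose area is exactly $4t^3$.

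Next I would count the integer points of $R_t$. For each integer $A$ with $|A|\le 2t$ the admissible $B$ form the interval $[\,t|A|-t^2,\,t^2\,]$, containing $\lfloor t^2\rfloor-\lceil t|A|-t^2\rceil+1$ integers; summing over $A$ and bounding the floors and ceilings from both sides yields $4t^3+O(t)$ with explicit linear error terms. A pleasant feature is that the a priori $O(t^2)$ boundary contribution in fact collapses to $O(t)$ after the summation, so that the only genuine $t^2$ contribution to the final answer will come from the degenerate locus.

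I would then classify the degenerate pairs lying in $R_t$. The condition $AB=0$ contributes the lines $A=0$ and $B=0$, while for $AB\neq 0$ the ratio $\alpha/\beta$ is a root of unity precisely when
$$
\frac{A^2}{B}=\frac{(\alpha+\beta)^2}{\alpha\beta}=2+2\cos(2\pi/k)
$$
is rational; by Niven's theorem this forces $A^2/B\in\{1,2,3,4\}$ (the value $0$ being the already-counted case $A=0$), i.e. the four parabolas $A^2=jB$ for $j=1,2,3,4$. Each of these six loci is easy to count inside $R_t$: the line $A=0$ carries $2\lfloor t^2\rfloor+1$ points, whereas $B=0$ and the four parabolas each carry only $O(t)$ points, and all six loci meet pairwise only at the origin, so a short inclusion–exclusion gives the total degenerate count as $2t^2+O(t)$ with explicit constants. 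Subtracting this from the lattice count of $R_t$ produces $4t^3-2t^2+O(t)$, and the stated bounds $4t^3-t^2+7t$ and $4t^3-10t^2-29t$ follow once the linear error terms are pinned down.

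The main obstacle is therefore bookkeeping rather than conceptual: making every floor and ceiling estimate explicit and uniform in the fractional parts of $t$, $t^2$, $t/\sqrt 2$ and $t/\sqrt 3$, so that after the inclusion–exclusion the accumulated linear slack stays within the clean polynomial bounds claimed. The classification of degenerate pairs via Niven's theorem (equivalently via Lemma \ref{lemdeg}) is the one genuinely arithmetic input; everything else is a careful but routine count.
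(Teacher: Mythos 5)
Your proposal is correct, and it takes a genuinely different route from the paper. The paper fixes $A>0$ (doubling at the end) and splits into three regions according to the sign of $A^2-4B$ and of $B$, bounding the lattice count of each region separately from above and from below, with the degenerate pairs of Lemma \ref{lemdeg} removed region by region in the lower bound. You instead observe that $|\alpha|\le t$ is equivalent to both roots lying in the closed disc of radius $t$, which packages all three of the paper's regions into the single triangle $|A|\le 2t$, $t|A|-t^2\le B\le t^2$; the column-by-column count then gives $4t^3+O(t)$ with the quadratic boundary terms cancelling, and the entire $t^2$ defect comes from deleting the degenerate line $A=0$ (the loci $B=0$ and $A^2=jB$, $j=1,2,3,4$, contributing only $O(t)$). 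This is arguably cleaner and yields the sharper asymptotic $4t^3-2t^2+O(t)$ for the exact count, which is consistent with (indeed stronger than) the paper's bounds: the paper's upper bound $4t^3-t^2+7t$ is simply not tight in the $t^2$ term, because its estimate $\sum_{1\le B\le t^2}2\sqrt{B}\le \frac43 t^3+2t$ for the non-real region overshoots by about $\frac12 t^2$ per half-plane. Your classification of the degenerate locus via Niven's theorem is equivalent to the paper's Lemma \ref{lemdeg} (augmented by the excluded cases $A=0$, $B=0$, $A^2=4B$). The only point needing care is the one you flag: the linear error in the triangle count is at most one point per column, hence at most $4t+2$ plus the $(\epsilon-\epsilon^2)t\le t/4$ term from the fractional part of $2t$, which is small enough that both stated inequalities survive down to $t=2$; so the bookkeeping does close.
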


\noindent{\bf Remark 3.} The upper- and lower bounds in Theorem \ref{lemup1} can slightly be improved if $t$ is an integer.

\vskip.2cm

Observe the following facts. Here the degenerate cases are too few to influence the density (cf. Lemma \ref{lemdeg}).
\begin{itemize}
\item For every non-zero integer $K$ there exist infinitely many $U$ with $U_2=K$. For this we may choose $A=K$ with $B$ arbitrary.
\item For every integer $K$ there exist infinitely many $U$ with $U_3=K$. For this we may choose $B=A^2-K$ with $A$ arbitrary.
\item Let $U_4=K$. 
Observe that $A$ and $A^2-2B$ are both odd or both even, hence $K\not\equiv 2\pmod{4}$.
If $K$ is odd, we can take $A=1$ and $B=\frac{1-K}{2}$. If $4$ divides $K$, we can take $A=2$ and $B=2-\frac{1}{4}K$.
\end{itemize}
We conclude that  $$\lim_{N \to \infty}\frac{\ml_n(N)}{N} =1,1,\frac{3}{4}, \ {\rm for} \ n=2,3,4, {\rm respectively}.$$ In the sequel we restrict our attention to $n\geq 5$.

The following statement provides an upper bound for the number of $n$-th terms up to $N$ counted over all non-degenerate Lucas sequences.

\begin{theorem}
\label{thm22}
{\rm i)}  $|\ml_n(N)|\leq \left(612 + 2^{20-n} n^2\right) N^{\frac {3}{n-1}}$ for $n \geq 5$.  \\
{\rm ii)} $|\ml_{\geq n}(N)| \leq \left(612 +2^{22-n}n^2\right)N^{\frac {3}{n-1}}+ 1836N^{\frac {3}{n} }\log N$ for $n \geq 5$.
\end{theorem}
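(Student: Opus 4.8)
The whole argument rests on factoring the form $F_n$. Writing $U_n=F_n(A,B)=(\alpha^n-\beta^n)/(\alpha-\beta)=\prod_{j=1}^{n-1}(\alpha-\zeta^j\beta)$ with $\zeta=e^{2\pi i/n}$ and pairing each factor with its complex conjugate, one obtains
$$
F_n(A,B)=A^{\varepsilon}\prod_{j=1}^{d}(A^2-\lambda_j B),\qquad \lambda_j=4\cos^2\frac{\pi j}{n},
$$
where $(\varepsilon,d)=(0,(n-1)/2)$ for $n$ odd and $(\varepsilon,d)=(1,(n-2)/2)$ for $n$ even, the extra factor $A$ coming from the root $\zeta^{n/2}=-1$. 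The numbers $\lambda_j$ are distinct, so $G_n(u,y):=\prod_{j=1}^{d}(u-\lambda_j y)$ is a binary form of degree $d$ with non-zero discriminant and $F_n(A,B)=A^{\varepsilon}G_n(A^2,B)$. This is precisely the shape $G(x^2,y)$ advertised in the introduction, and since every element of $\ml_n(N)$ is a value $|F_n(A,B)|\le N$, it suffices to bound the number of integers so represented.

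For odd $n\ge 7$ we have $\varepsilon=0$ and $d=(n-1)/2\ge 3$, and I would prove the required extension of the Erd\H{o}s--Mahler theorem by splitting the region $|G_n(x^2,y)|\le N$. In the \emph{core}, where $|x|\ll N^{1/(n-1)}$ and $|y|\ll N^{2/(n-1)}$, a volume estimate gives $O(N^{3/(n-1)})$ lattice points. The region also has $d$ \emph{tentacles} running along the parabolas $x^2=\lambda_j y$; on the $j$-th tentacle exactly one factor is small while the other $d-1$ have size $\asymp|x|^{2(d-1)}$, so $|x^2-\lambda_j y|\le N/(c_j|x|^{2(d-1)})$ with $c_j=\prod_{i\ne j}|\lambda_i-\lambda_j|$. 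Summing the admissible $y$ over $x$, and invoking the extended Lewis--Mahler inequality to control how often $x^2/\lambda_j$ falls inside such a shrinking interval, contributes a further $O(N^{3/(n-1)})$. Tracking the $c_j$ and the number $d$ of tentacles produces the explicit coefficient $612+2^{20-n}n^2$.

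Two cases need separate treatment. For even $n\ge 6$ the factor $A$ survives, so I would fix $A$ and count the $B$ with $|G_n(A^2,B)|\le N/|A|$, summing over $A$; this reproduces the exponent $3/(n-1)$ with an adjusted constant and explains the lower-order looseness in the even case. For $n=5$ the form $G_5$ is merely a binary \emph{quadratic}, to which Erd\H{o}s--Mahler does not apply, and I would instead bound the lattice points of $|(A^2-\lambda_1 B)(A^2-\lambda_2 B)|\le N$ directly by an elementary divisor / geometry-of-numbers count of order $N^{3/4}=N^{3/(n-1)}$. For part ii I would write $\ml_{\ge n}(N)\subseteq\bigcup_{m=n}^{M}\ml_m(N)$ and use Theorem \ref{thm21}: as the Fibonacci sequence grows slowest, $|U_m|\le N$ forces $\tfrac12((1+\sqrt5)/2)^{m-2}\le N$, so $\ml_m(N)=\emptyset$ for $m>M$ with $M\ll\log N$. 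Applying part i termwise, the $m=n$ term gives the coefficient $612+2^{22-n}n^2$ of $N^{3/(n-1)}$, while every $m\ge n+1$ has exponent $3/(m-1)\le 3/n$; since there are $O(\log N)$ such terms with uniformly bounded constants, their total is $O(N^{3/n}\log N)$, yielding the stated $1836\,N^{3/n}\log N$.

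The main obstacle is making the extension of Erd\H{o}s--Mahler quantitative. The core is easy, but the tentacles contribute at the \emph{same} order $N^{3/(n-1)}$, and their size is governed by the distribution of the squares $x^2$ modulo the (generally irrational) $\lambda_j$. Converting this qualitative equidistribution into an explicit bound with the small constants of the theorem is exactly what the extended Lewis--Mahler inequality for $G(x^2,y)=k$ is for; getting the coefficient down to $612$, and handling the degree-two case $n=5$ and the surviving factor $A$ in the even case without inflating the exponent, is the delicate part.
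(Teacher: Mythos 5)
Your geometric picture (the factorization of $F_n$ through $\lambda_j=4\cos^2\frac{j\pi}{n}$, a core box contributing $O(N^{3/(n-1)})$, and tentacles along $x^2=\lambda_j y$) matches the paper's, and your reduction of part (ii) to part (i) via Theorem \ref{thm21} plus a sum over $m\leq O(\log N)$ is exactly the paper's argument. But there is a genuine gap at the decisive step of part (i): the tentacle count. You propose to control it by ``the extended Lewis--Mahler inequality'', but that result (Lemmas \ref{lem57} and \ref{lem58} in the paper) bounds the number of \emph{coprime solutions of $G_m(x,y)=k$ for a fixed $k$}; it is a multiplicity bound, which is what the \emph{lower} bounds (Theorems \ref{thm23} and \ref{thm24}) require, and it says nothing about how often an integer $y$ falls in the shrinking window around $x^2/\lambda_j$. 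For an upper bound on $|\ml_n(N)|$ no multiplicity control is needed at all, since the number of values is at most the number of pairs $(A,B)$. Moreover, as you sketch it, summing ``the admissible $y$ over $x$'' leaves an unresolved $+1$ per value of $x$, and $x$ is not a priori bounded on a tentacle; without a further idea this is exactly where the argument breaks.

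The paper's proof is entirely elementary and sidesteps all of this by splitting into four ranges of $(A,|B|)$ relative to $9N^{1/(n-1)}$ and $17N^{2/(n-1)}$: the core box gives $2\cdot 306\,N^{3/(n-1)}=612\,N^{3/(n-1)}$ (so $612$ is just the box count, not the outcome of any equidistribution analysis); the ranges $A\geq 9N^{1/(n-1)}$ with $|B|<17N^{2/(n-1)}$, and $B\leq -17N^{2/(n-1)}$, are shown to contain \emph{no} pairs at all, via lower bounds for $\prod_k\cos\frac{k\pi}{n}$; and in the remaining range $B\geq 17N^{2/(n-1)}$ one parametrizes by $B$, showing that $A$ is confined to an interval of length $2c_3NB^{-\frac{n}{2}+1}$ about $2\sqrt{B}\cos\frac{k_0\pi}{n}$ and summing these lengths over $B$ (a convergent sum for $n\geq 5$) to obtain the $2^{20-n}n^2$ term. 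This works uniformly for all $n\geq 5$, odd or even; your separate treatments of $n=5$ and of even $n$ address difficulties that arise only in the lower-bound theorems, not here. To repair your write-up, replace the appeal to Lewis--Mahler by a direct interval count of this kind (or prove the needed non-accumulation statement yourself), and carry out the computation of the constants rather than asserting that tracking the $c_j$ produces them.
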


\noindent Theorem \ref{thm22} implies that $\ml_n(N) \ll N^{\frac {3}{n-1}}$ and the density of $\ml_{\geq 5}$ is zero. In fact, since $N^{\frac 35} \log N < 6 N^{\frac 34}$ for all $N\geq 1$,
\begin{cor}
\label{thm25}
$$|\ml_n(N)| \leq (612+o_n(1))N^{\frac{3}{n-1}}\ {\rm for ~all~}n,\ {\rm and}\ |\ml_{\geq 5}(N)| \leq 3.3 \cdot10^6 \cdot N^{\frac 34}.$$
\end{cor}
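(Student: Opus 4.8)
The plan is to derive both assertions directly from Theorem~\ref{thm22} by elementary estimation, since the corollary is essentially a repackaging of the explicit constants. For the first assertion, I would simply read off part (i) of Theorem~\ref{thm22}: the coefficient of $N^{3/(n-1)}$ is $612+2^{20-n}n^2$, and the correction term satisfies $2^{20-n}n^2\to 0$ as $n\to\infty$, so it is $o_n(1)$. Hence $|\ml_n(N)|\leq(612+o_n(1))N^{3/(n-1)}$ holds for all $n\geq 5$ with nothing further to prove.

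For the second assertion I would specialize part (ii) of Theorem~\ref{thm22} to $n=5$, where $N^{3/(n-1)}=N^{3/4}$ and $N^{3/n}=N^{3/5}$. The coefficient of $N^{3/4}$ becomes
$$
612+2^{22-5}\cdot 5^2=612+2^{17}\cdot 25=612+3276800=3277412 .
$$
The remaining term $1836\,N^{3/5}\log N$ must then be absorbed into the leading $N^{3/4}$ term. Using the stated inequality $N^{3/5}\log N<6N^{3/4}$, valid for all $N\geq 1$, this term is bounded by $1836\cdot 6\,N^{3/4}=11016\,N^{3/4}$. Adding the two contributions gives
$$
|\ml_{\geq 5}(N)|\leq(3277412+11016)\,N^{3/4}=3288428\,N^{3/4}<3.3\cdot 10^{6}\,N^{3/4},
$$
which is the claimed bound, with a small margin to spare.

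The only step that is not pure bookkeeping is verifying the auxiliary inequality $N^{3/5}\log N<6N^{3/4}$ for $N\geq 1$. Dividing by $N^{3/5}$ reduces it to $\log N<6N^{3/20}$, and I would confirm this by studying $g(N)=6N^{3/20}-\log N$ on $[1,\infty)$: one has $g(1)=6>0$, and setting $g'(N)=\tfrac{9}{10}N^{-17/20}-N^{-1}=0$ locates the unique interior critical point at $N=(10/9)^{20/3}$, where $N^{3/20}=10/9$ gives $g_{\min}=\tfrac{20}{3}-\tfrac{20}{3}\log(10/9)>0$. Thus $g$ is positive throughout and the inequality holds. This elementary calculus estimate is the ``hard part'' only in the sense that every other step is arithmetic; no genuine obstacle arises, since all the substantive work has already been carried out in Theorem~\ref{thm22}.
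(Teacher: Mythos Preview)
Your proof is correct and follows exactly the approach the paper intends: the paper simply notes the inequality $N^{3/5}\log N<6N^{3/4}$ for all $N\geq 1$ immediately before stating the corollary, leaving the arithmetic with Theorem~\ref{thm22} implicit, and you have carried out precisely that arithmetic (with the bonus of a clean verification of the auxiliary inequality that the paper merely asserts).
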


The next statements show that the exponents $\frac {3}{n-1}$ in Theorem \ref{thm22} are best possible. 
\begin{theorem} \label{thm23}
Let $n\geq 7$ and odd. Then there exists a positive constant $c_1$ depending only on $n$ such that
$$
|\ml_n(N)| \geq c_1N^{\frac{3}{n-1}}, \ \ {\rm hence} \ \  |\ml_{\geq n}(N)| \geq c_1N^{\frac{3}{n-1}}.
$$
provided that $|\ml_n(N)|$ and $|\ml_{\geq n}(N)|$, respectively, are positive.
\end{theorem}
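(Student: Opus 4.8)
The plan is to use the fact that, for odd $n$, the $n$-th term is a binary form evaluated at $(A^2,B)$, and then to prove a lower bound of Erd\H{o}s--Mahler type for the integers so represented. Since every monomial $x^ay^b$ of $F_n$ satisfies $a+2b=n-1$ and $n-1$ is even, the exponent $a$ is always even, so $F_n(x,y)=H(x^2,y)$ for a binary form $H$ of degree $m=\frac{n-1}{2}$, and $U_n=H(A^2,B)$; the hypothesis $n\ge 7$ gives $m\ge 3$. Via the Chebyshev connection noted in the introduction, $F_n(2\cos\theta,1)=\sin(n\theta)/\sin\theta$, so the roots of $F_n(\cdot,1)$ are the numbers $2\cos(k\pi/n)$ for $1\le k\le n-1$; as $n$ is odd these split into the $m$ distinct pairs $\pm 2\cos(j\pi/n)$, whence $H(X,1)=\prod_{j=1}^{m}\bigl(X-4\cos^2(j\pi/n)\bigr)$ has $m$ distinct positive real roots. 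Thus $H$ is a separable binary form of degree $m\ge 3$; this separability is the only structural property of the Fibonacci polynomials that I need.

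Next I would count admissible parameter pairs. Substituting $X=a^2$ and then scaling $(X,b)=N^{1/m}(u,v)$ shows that the region $\{(a,b)\in\R^2:|H(a^2,b)|\le N\}$ has area $N^{3/(2m)}\cdot\tfrac12\iint_{u\ge0,\,|H(u,v)|\le 1}u^{-1/2}\,du\,dv$, and this last integral converges because $m\ge 3$ tames the tubes along the real root lines of $H$. Since $3/(2m)=3/(n-1)$, the number of integer pairs $(A,B)$ with $|H(A^2,B)|\le N$ is $T\asymp_n N^{3/(n-1)}$ (a matching lower bound coming already from a box $|A|\ll N^{1/(n-1)}$, $|B|\ll N^{2/(n-1)}$ on which $|H(A^2,B)|\le N$ throughout). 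The entire difficulty is now to show that these $T$ pairs yield $\gg_n N^{3/(n-1)}$ \emph{distinct} values of $U_n$.

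To pass from pairs to values I would apply Cauchy--Schwarz. Let $r(k)$ be the number of admissible pairs with $H(A^2,B)=k$ and put $S=|\ml_n(N)|$; then $T=\sum_k r(k)\le S^{1/2}\bigl(\sum_k r(k)^2\bigr)^{1/2}$, so it suffices to establish $W:=\sum_k r(k)^2\ll_n T$. Now $W$ counts quadruples $(A,B,A',B')$ in the region with $H(A^2,B)=H(A'^2,B')$; the diagonal part, where $(A^2,B)=(A'^2,B')$ and hence $A'=\pm A$, $B'=B$, contributes $\asymp T$ and is harmless. Bounding the remaining coincidences, those with $(A^2,B)\ne(A'^2,B')$, is the crux and the main obstacle: it is exactly the Erd\H{o}s--Mahler-type input announced in the introduction, extended to the substitution $X=x^2$, and it cannot be obtained from a per-$k$ Thue bound alone (the factor $m^{\omega(k)}$ is too large on average) but requires the geometric coincidence-counting argument, which is where the separability of $H$ enters to control the variety $H(X,Y)=H(X',Y')$. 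Granting $W\ll_n T$, Cauchy--Schwarz gives $S\ge T^2/W\gg_n T\asymp_n N^{3/(n-1)}$, with a constant $c_1$ depending only on $n$. The proviso that $|\ml_n(N)|$ be positive merely excludes the vacuous small-$N$ range in which no admissible pair exists (and lets $c_1$ absorb the finitely many moderate $N$), and the inequality for $\ml_{\ge n}(N)$ is immediate from $\ml_{\ge n}(N)\supseteq\ml_n(N)$.
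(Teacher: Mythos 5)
Your setup is sound and matches the paper's frame: for odd $n$ every exponent of $x$ in $F_n(x,y)$ is even, so $U_n=G_m(A^2,B)$ for a separable binary form $G_m$ of degree $m=\frac{n-1}{2}\ge 3$, the number $T$ of pairs $(A,B)$ in the box $|A|\ll N^{1/(n-1)}$, $|B|\ll N^{2/(n-1)}$ with $|F_n(A,B)|\le N$ is $\asymp_n N^{3/(n-1)}$, and the whole theorem reduces to showing that these pairs cannot concentrate on too few values $k$. But at exactly that point you write ``Granting $W\ll_n T$'' and stop. That second-moment bound \emph{is} the theorem: the diagonal of $W=\sum_k r(k)^2$ contributes $\asymp T$, and the off-diagonal count of solutions of $G_m(X^2,Y)=G_m(X'^2,Y')$ in the box is an integer-point count on a threefold for which separability of $G_m$ alone gives you nothing quantitative; no argument is offered for why it is $O_n(T)$ rather than, say, $T(\log T)^{C}$. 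So the proposal correctly diagnoses the obstacle (a per-$k$ Thue bound of the shape $C^{\omega(k)}$ loses logarithms on average) but does not overcome it.

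The paper's route is different and is worth internalizing, because it avoids the second moment entirely. Following Erd\H{o}s--Mahler, one restricts to \emph{coprime} pairs $(x,y)$ (Lemma \ref{lem53}; coprimality is essential for the input from \cite{lm}) and introduces the arithmetic function $h(R)=\prod p^a$ over prime powers $p^a\| R$ with $\gamma<p\le N^{\theta}$, $p^a\le N^{\theta}$. A congruence count (Lemma \ref{lem51}) bounds $\prod h(F_n(x,y))$ over the box, and a pigeonhole step (Lemmas \ref{lem54}--\ref{lem56}) shows that for at least half of the coprime pairs one can write $|F_n(x,y)|=k_1k_2$ with $k_1$ having $O_n(1)$ prime factors and $k_2\le |F_n(x,y)|^{96\theta n}$. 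For such $k$, Lewis--Mahler's bound (Lemma \ref{lem57}) on the Thue--Mahler inequality $0<|G(x,y)|\prod_{\tau}|G(x,y)|_{p_\tau}\le\max(|x|,|y|)^{c_4}$ caps the number of coprime representations by a constant $c_9(n)$, uniformly in $k$; dividing $\frac12R^3$ by $c_9+c_{10}$ gives the theorem directly, with no need for $W$. If you want to salvage your Cauchy--Schwarz skeleton you must still prove this multiplicity bound for typical $k$, at which point Cauchy--Schwarz is superfluous; as written, the proof has a genuine gap at its central step.
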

\noindent Thus for odd values $n\geq 7$ the bounds are sharp apart from a multiplicative constant depending only on $n$.

\begin{cor}
For $n \geq 7, n$ odd we have
$$ |\ml_n(N)| \ll\gg_n  N^{\frac{3}{n-1}}.$$
\end{cor}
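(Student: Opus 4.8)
The plan is to read off the two-sided estimate directly from the bounds already established, since the corollary is nothing more than the conjunction of Theorems \ref{thm22} and \ref{thm23}. Recall that $|\ml_n(N)| \ll\gg_n N^{3/(n-1)}$ abbreviates the pair of inequalities $c_1 N^{3/(n-1)} \leq |\ml_n(N)| \leq c_2 N^{3/(n-1)}$ with positive constants $c_1, c_2$ depending only on $n$; thus it suffices to establish each inequality separately for odd $n \geq 7$.

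For the upper bound I would invoke Theorem \ref{thm22} i), which gives $|\ml_n(N)| \leq (612 + 2^{20-n} n^2) N^{3/(n-1)}$ for every $n \geq 5$. As the coefficient depends only on $n$, this is precisely $|\ml_n(N)| \ll_n N^{3/(n-1)}$. For the lower bound I would appeal to Theorem \ref{thm23}, which yields $|\ml_n(N)| \geq c_1 N^{3/(n-1)}$ for odd $n \geq 7$, provided that $|\ml_n(N)|$ is positive.

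The only point deserving comment is therefore this positivity hypothesis, and it presents no real obstacle. The Fibonacci sequence, corresponding to $(A,B) = (1,-1)$, is non-degenerate and its $n$-th term is a fixed positive integer; consequently this value belongs to $\ml_n(N)$ as soon as $N$ is at least that large, so $\ml_n(N)$ is nonempty for all sufficiently large $N$. Combining the upper bound with the lower bound, valid once the set is nonempty, yields the claimed $\ll\gg_n$ estimate.
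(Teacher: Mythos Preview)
Your argument is correct and coincides with the paper's approach: the corollary is presented without proof as the immediate conjunction of Theorem~\ref{thm22}\,i) and Theorem~\ref{thm23}. Your extra remark disposing of the positivity proviso via the Fibonacci sequence is a sensible addition, since the paper leaves that point implicit.
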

\noindent For other values of $n \geq 5$ our lower bound is slightly weaker, but of the same order of magnitude.

\begin{theorem} \label{thm24}
Let $n\geq 5$. Then there exists a positive constant $c_2$ depending only on $n$ such that
$$
|\ml_n(N)| \geq c_2N^{\frac{3}{n-1}- \frac{4}{\log \log N}}, \ \ {\rm hence} \ \  |\ml_{\geq n}(N)| \geq c_2N^{\frac{3}{n-1}- \frac{4}{\log \log N}},
$$
provided that $|\ml_n(N)|$ and $|\ml_{\geq n}(N)|$, respectively, are positive.
\end{theorem}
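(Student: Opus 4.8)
The plan is to bound the size of the image of the map $(A,B)\mapsto U_n=F_n(A,B)$ from below by dividing a lower bound for the number of admissible integer pairs by an upper bound for the largest number of pairs that can hit a common value. First I would record the structural fact that, since $F_n(x,y)$ is isobaric of weight $n-1$ when $x,y$ carry the weights $1,2$, one has $F_n(x,y)=\tilde F_n(x^2,y)$ with $\tilde F_n$ a binary form of degree $(n-1)/2$ when $n$ is odd, and $F_n(x,y)=x\,G_n(x^2,y)$ with $G_n$ a binary form of degree $n/2-1$ when $n$ is even. I would also observe that for odd $n\geq 7$ the statement is immediate from Theorem \ref{thm23}, because $N^{3/(n-1)}\geq N^{3/(n-1)-4/\log\log N}$; so the real work is confined to $n=5$ and to even $n\geq 6$.

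For the count of admissible pairs I would work inside the box $0<A\leq X$, $|B|\leq X^2$ with $X=(N/C_n)^{1/(n-1)}$, where $C_n$ is the sum of the absolute values of the coefficients of $F_n$. By the isobaric bound every such pair satisfies $|U_n|\leq N$, and the number of pairs is $\gg_n X^3\gg_n N^{3/(n-1)}$. Discarding the pairs with $U_n=0$ and the degenerate ones (which lie on the $O_n(X)$ lines $B=0$ or $A^2=cB$, hence are negligible) still leaves $\gg_n N^{3/(n-1)}$ pairs with $0<|U_n|\leq N$.

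The heart of the argument is to bound, for each $1\leq k\leq N$, the number $M(k)$ of admissible pairs in the box with $|U_n|=k$. For even $n$ this is clean: from $U_n=A\,G_n(A^2,B)$ the integer $A$ divides $k$, so there are at most $2\tau(k)$ choices for $A$, and for each fixed $A$ the relation $G_n(A^2,B)=\pm k/A$ is a polynomial in $B$ of degree $n/2-1$, whence $M(k)\leq (n-2)\tau(k)$. For $n=5$ we have $\tilde F_5(u,y)=u^2-3uy+y^2$, a binary quadratic form of nonzero discriminant $5$; the number of its representations of $k$ with bounded variables is $\ll \tau(k)\log N$ by standard estimates, and restricting $u$ to be a perfect square while accounting for the two signs of $A$ costs only a bounded factor, so again $M(k)\ll \tau(k)\log N$. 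In both cases $M(k)\leq N^{4/\log\log N}$ for $N$ large, since $\tau(k)\leq N^{(\log 2+o(1))/\log\log N}$ and the remaining factors are $N^{o(1)}$. Finally, by pigeonhole $|\ml_n(N)|\geq (\#\,\text{admissible pairs})/\max_k M(k)\geq c_2 N^{3/(n-1)-4/\log\log N}$, and the bound for $\ml_{\geq n}$ follows at once from $\ml_n(N)\subseteq \ml_{\geq n}(N)$.

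The main obstacle I anticipate is the multiplicity bound for $n=5$: unlike the even case there is no divisibility constraint confining $A$ to a small set, so one must invoke a genuine representation bound for the indefinite binary quadratic form $\tilde F_5$, control the contribution of its infinite automorphism group through the size restriction imposed by the box, and verify that the perfect-square condition $u=A^2$ does not spoil the estimate. Keeping every constant of the type $C^{\omega(k)}$ or $\tau(k)$ safely below the threshold $N^{4/\log\log N}$ — which is precisely why the sharp Theorem \ref{thm23} is used for odd $n\geq 7$ rather than a Thue-type bound whose constant would grow with $n$ — is the delicate bookkeeping point of the proof.
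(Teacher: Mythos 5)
Your proposal is correct and follows essentially the same route as the paper: a box count of admissible pairs divided by a maximal multiplicity bound, with the divisibility $A\mid k$ plus the divisor bound $\tau(k)\leq N^{1.066/\log\log N}$ handling even $n$, Theorem \ref{thm23} handling odd $n\geq 7$, and a representation bound for the discriminant-$5$ quadratic form handling $n=5$. The one place where you defer to ``standard estimates'' is the $n=5$ multiplicity bound, which is precisely where the paper does its real work (Lemma \ref{lem5}): it rewrites $4F_5(A,B)=(2B-3A^2)^2-5A^4$ as a Pell equation $x^2-5y^2=4k$, bounds the number of solution classes by $2^{\omega(t)}\tau_2(t)\leq\tau(t)^3$ via Gy\H{o}ry's lemma, and shows by a growth argument that each class contributes only $O(\log N)$ solutions with $y=A^2$ in the box --- the same orbit-count-times-logarithm mechanism you describe.
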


\noindent {\bf Remark 4.} We expect that, for all $n \geq 5$, 
$$
\ml_n(N) \gg_n N^{\frac {3}{n-1}} \ \ {\rm and \  therefore} \ \ \ml_{\geq n}(N) \gg_n N^{\frac {3}{n-1}}.
$$

Because of the symmetry of the polynomials $x^2-Ax+B =\\(x- \alpha)(x-\beta)$ and $x^2+Ax +B=(x+\alpha)(x+\beta)$ we assume that $A>0$ in the sequel, unless stated otherwise. We further assume that in the real case $|\alpha| > |\beta|$. It follows that $\alpha > 0$.

\section{Proof of Theorem \ref{thm21}}
\label{groluc}

To prove Theorem \ref{thm21} in the case where $\alpha,\beta$ are non-real, we shall use bounds for linear forms in two logarithms. For this, we introduce some notation. For an algebraic number $\gamma$ of degree $d$ over $\Q$, the absolute logarithmic height of $\gamma$ is defined by
$$
h(\gamma)=\frac{1}{d}\left(\log|a|+\sum\limits_{i=1}^d \log\max(1,|\gamma^{(i)}|)\right)
$$
where $a$ is the leading coefficient of the minimal polynomial of $\gamma$ over $\Z$, and the $\gamma^{(i)}$'s are the algebraic conjugates of $\gamma$. Let $\alpha_1$, $\alpha_2$ be non-zero algebraic numbers. Consider the linear form
$$
\Lambda=b_2\log\alpha_2 - b_1\log \alpha_1
$$
where $b_1,b_2$ are non-zero integers, and the logarithm of a non-zero complex number $w$ (here and later on) is taken according to
$$
\log w = \log |w| + {\mathrm i}\arg w
$$
with $-\pi < \arg w \leq \pi$. Set $$D=\frac{[\Q(\alpha_1,\alpha_2):\Q]}{[\R(\alpha_1,\alpha_2):\R]} \ \ { \rm and } \ \
b'= \frac{b_1}{D\log A_2}+\frac{b_2}{D\log A_1},
$$
where $A_1,A_2$ are real numbers greater than $1$ such that
$$
\log A_i \geq \max\left\{h(\alpha_i),\ \frac{|\log \alpha_i|}{D},\ \frac 1D \right\}\ \ \ (i=1,2).
$$

The following result is due to Laurent, see Corollary 1 in \cite{la}.

\begin{lemma}
\label{lemla}
Keeping the above notation, suppose that $\alpha_1,\alpha_2$ are multiplicatively independent. Then we have
$$
\log |\Lambda|\geq -25.2D^4\left(\max\left\{\log b' + 0.21,\ \frac{20}{D},\ 1\right\}\right)^2\log A_1\log A_2.
$$
\end{lemma}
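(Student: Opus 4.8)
The plan is to treat Lemma \ref{lemla} as the deep transcendence statement it is, namely Laurent's sharp lower bound for a linear form in two logarithms, and to reconstruct it by the interpolation determinant method of Laurent--Mignotte--Nesterenko rather than by the classical Gel'fond--Baker auxiliary function. Throughout I would argue by contradiction, assuming that $|\Lambda|$ is smaller than the right-hand side permits and deriving an impossibility. Before starting I would fix the normalizations implicit in the statement (the value of $D$, the admissible heights $\log A_1$, $\log A_2$, and the quantity $b'$) and record the elementary consequences of the multiplicative independence of $\alpha_1,\alpha_2$, in particular that no nontrivial relation $\alpha_1^{s}\alpha_2^{t}=1$ can hold.

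The analytic heart is the construction of a suitable interpolation determinant. First I would choose positive integer parameters — the dimension $L$ of the determinant together with ranges $S$ and $T$ for the exponents — to be optimized only at the very end. I then consider the family of functions $z\mapsto z^{\sigma}\alpha_1^{tz}$ (equivalently the values $\alpha_1^{s}\alpha_2^{t}$ obtained after substituting the interpolation abscissae), and form the square matrix whose entries are these functions evaluated at integer points $z=s$, $0\le s<S$, with multiplicities governed by $\sigma$. The determinant $\Delta$ of this matrix is the central object. The key analytic step is to bound $|\Delta|$ from above: because $|\Lambda|$ is assumed tiny, $\alpha_1^{b_2}$ and $\alpha_2^{b_1}$ are extremely close, so after rewriting $\alpha_2^{z}=\alpha_1^{(b_1/b_2)z}e^{(\Lambda/b_2)z}$ the two-variable monomials nearly collapse to one-variable ones and the rows of the matrix become nearly proportional. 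Quantifying this through the maximum modulus principle — a Schwarz-type lemma applied to the determinant regarded as an entire function of an auxiliary variable, which acquires a high-order zero as $\Lambda\to 0$ — yields an upper bound for $|\Delta|$ that decays like a high power of $|\Lambda|$.

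The arithmetic counterpart is a Liouville-type lower bound. Since the entries of $\Delta$ are algebraic numbers lying in $\Q(\alpha_1,\alpha_2)$ with controlled denominators and heights, the product formula gives $|\Delta|\ge(\text{denominator})^{-1}$, i.e. an explicit lower bound in terms of $D$, $h(\alpha_1)$, $h(\alpha_2)$ and the chosen parameters. This is only useful once one knows $\Delta\ne 0$, which is exactly where multiplicative independence enters: a zero estimate shows that the relevant monomial-exponential functions cannot all vanish to high order at the chosen points unless $\alpha_1,\alpha_2$ satisfy a multiplicative relation. Comparing the analytic upper bound with the arithmetic lower bound forces $|\Lambda|$ not to be too small, and the stated inequality drops out after optimizing $L,S,T$ against $\log A_1\log A_2$ and $b'$.

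The main obstacle — and the reason the result is genuinely hard rather than routine — is twofold. The delicate part is the nonvanishing step: proving $\Delta\ne 0$ requires a precise zero (multiplicity) estimate and is where the hypothesis of multiplicative independence is used in an essential, quantitative way. The second difficulty is technical but unavoidable if one wants the sharp numerical constants in the statement (the $25.2$, the $0.21$, the factor $D^4$, and the $(\log b')^2$): these emerge only after a carefully balanced choice of $L,S,T$, and reproducing them exactly is a substantial bookkeeping task. For the purposes of this paper I would therefore simply invoke Laurent's Corollary 1 in \cite{la}, whose constants are already optimized in precisely the form needed for the non-real case of Theorem \ref{thm21}.
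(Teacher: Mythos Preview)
Your proposal is correct and ends up in exactly the same place as the paper: Lemma~\ref{lemla} is not proved in the paper at all but is simply quoted verbatim as Corollary~1 of Laurent~\cite{la}. Your outline of the interpolation determinant method is accurate background, but for the paper's purposes the lemma is treated as an external input, so your closing sentence (``simply invoke Laurent's Corollary~1'') already matches the paper's approach precisely.
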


The following consequence of Lemma \ref{lemla} will be useful.

\begin{lemma}
\label{lemb}
Let $\delta$ be an algebraic number which is not a root of unity. Then for any non-zero integer $\ell$ we have
$$
\left|\delta^\ell-1\right|\geq\frac{1}{2}\exp\left(-25.2\pi D_\delta^3\left(\max\left\{\log\frac{(\pi+D_\delta\log A_\delta)|\ell|}{\pi D_\delta\log A_\delta} + 0.21,\frac{20}{D_\delta},1\right\}\right)^2\log A_\delta\right),
$$
where
$$
D_\delta=\frac{[\Q(\delta):\Q]}{[\R(\delta):\R]}
$$
and $A_\delta$ is a real number with
$$
\log A_\delta\geq \max \left\{h(\delta),\ \frac{|\log \delta|}{D_\delta},\ \frac{1}{D_\delta} \right\}.
$$
\end{lemma}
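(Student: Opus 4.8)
The plan is to write $\delta^\ell-1=e^{\Lambda}-1$ for a suitable linear form $\Lambda$ in two logarithms and to apply Lemma \ref{lemla} to $\Lambda$. Set $w=\delta^\ell-1$. Since the exponent on the right-hand side of the asserted inequality is strictly negative, that right-hand side is less than $\tfrac12$, so if $|w|\ge\tfrac12$ there is nothing to prove. Assume therefore $|w|<\tfrac12$. Choose the integer $k$ closest to $\frac{\ell\arg\delta}{2\pi}$ (breaking ties toward $0$) and put
\[
\Lambda=\ell\log\delta-2\pi i k=\ell\log\delta-2k\log(-1),
\]
so that $e^{\Lambda}=\delta^\ell=1+w$ and $|\mathrm{Im}\,\Lambda|\le\pi$. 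Because $|w|<\tfrac12$, comparing imaginary parts shows $\Lambda$ is the principal logarithm $\log(1+w)$, whence the elementary estimate $|\Lambda|\le\frac{|w|}{1-|w|}\le 2|w|$ gives
\[
|\delta^\ell-1|=|w|\ge\tfrac12|\Lambda|.
\]
It thus suffices to bound $|\Lambda|$ from below.

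Next I apply Lemma \ref{lemla} with $\alpha_1=\delta$, $\alpha_2=-1$, $b_1=\ell$, $b_2=2k$, so that $\Lambda=b_2\log\alpha_2-b_1\log\alpha_1$. The form does not vanish: $\Lambda=0$ would force $\delta^{2\ell}=1$, contradicting that $\delta$ is not a root of unity, and the same remark supplies the independence that is really needed, since any relation involving $\delta$ with a nonzero exponent makes $\delta$ a root of unity. Adjoining $-1$ changes neither $[\Q(\delta):\Q]$ nor $[\R(\delta):\R]$, so $D=D_\delta$. As $h(-1)=0$ and $|\log(-1)|=\pi$, I may take $\log A_2=\frac{\pi}{D_\delta}$ while keeping $\log A_1=\log A_\delta$; then $D^4\log A_1\log A_2=\pi D_\delta^3\log A_\delta$, which is precisely the factor $25.2\pi D_\delta^3\log A_\delta$ appearing in the statement.

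It remains to estimate $b'=\frac{|\ell|}{D\log A_2}+\frac{2|k|}{D\log A_1}=\frac{|\ell|}{\pi}+\frac{2|k|}{D_\delta\log A_\delta}$. From $|\arg\delta|\le\pi$ and the choice of $k$ one checks $2|k|\le|\ell|$, so
\[
b'\le\frac{|\ell|}{\pi}+\frac{|\ell|}{D_\delta\log A_\delta}=\frac{(\pi+D_\delta\log A_\delta)|\ell|}{\pi D_\delta\log A_\delta}.
\]
Replacing $\log b'$ by the logarithm of this upper bound inside $\max\{\log b'+0.21,\,20/D,\,1\}$ only enlarges the maximum, i.e. weakens the lower bound in the admissible direction; feeding this into Lemma \ref{lemla} and combining with $|\delta^\ell-1|\ge\tfrac12|\Lambda|$ reproduces the claimed inequality, the leading $\tfrac12$ coming from that last comparison.

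The main obstacle I expect is the use of the root of unity $\alpha_2=-1$: one must argue that Lemma \ref{lemla} applies here, the effective hypothesis being $\Lambda\neq0$, which holds exactly because $\delta$ is not a root of unity. Subsidiary care is needed in fixing the branch and $k$ so that the clean bound $2|k|\le|\ell|$ holds, and in treating the degenerate subcase $k=0$, where $\Lambda$ reduces to the single logarithm $\ell\log\delta$ and must be bounded below separately (for instance by a one-logarithm estimate, or by observing that a height lower bound keeps $|\log\delta|$, and hence $|\delta^\ell-1|$, above the claimed quantity). The rest is routine bookkeeping of heights and degrees.
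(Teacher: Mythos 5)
Your argument is essentially identical to the paper's proof: both dispose of the case $|\delta^\ell-1|\ge\tfrac12$ trivially, write $\log(1+z)=\ell\log\delta+2k\log(-1)$ with $|2k|\le|\ell|$, take $\log A_2=\pi/D_\delta$ for the factor $-1$, and feed the resulting $b'$-bound into Lemma \ref{lemla}. The technical points you flag at the end (applicability of Laurent's bound when one of the $\alpha_i$ is $-1$, and the subcase $k=0$) are passed over silently in the paper as well, so they do not distinguish your route from theirs.
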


\begin{proof}
Put $z=\delta^\ell-1$. Since $\delta$ is not a root of unity, we have $z\neq 0$. If $|z|>\frac 12$, then our claim immediately follows. So we may assume that $|z|\leq \frac 12$. Then, as it is well-known, we have
\begin{equation}
\label{ez}
|\log(1+z)|\leq 2|z|.
\end{equation}
On the other hand, we also have
\begin{equation}
\label{az}
\log(1+z)=\ell\log(\delta)+2k\log(-1)
\end{equation}
with some integer $k$. As $|\log(1 + z)|\leq 1$, we have $|\Im(\log(1+z))|\leq 1$. Hence, by the choice of the logarithm,
$$
|\ell\Im(\log(\delta))+2k\pi |\leq 1
$$
which implies
$$
|2k|\leq \frac{1+\pi|\ell|}{\pi}=|\ell|+\frac{1}{\pi}.
$$
As $2k$ and $\ell$ are integers, this in fact gives
$$
|2k|\leq|\ell|.
$$
Hence, in view of \eqref{ez} and \eqref{az}, our claim follows from Lemma \ref{lemla}.
\end{proof}

\vskip.2cm

\noindent
{\bf Proof of Theorem \ref{thm21}.}
Let $n \geq 2$.\\
{\it Real case.} Suppose that $A^2>4B$. Then $\alpha$ and $\beta$ are real. As stated at the end of Section \ref{mres} we assume $A\geq 1$ and $\alpha>|\beta|$. Put $D=\sqrt{A^2-4B}$.

Assume first that $B<0$. If $(A,B)=(1,-1)$, then $\alpha=\frac{1 +\sqrt{5}}{2}$, $\beta=\frac{1 - \sqrt{5}}{2}$ and
$
U_n=\frac{\alpha^n-\beta^n}{\sqrt{5}}.
$
Since $|\beta| < 1$, we have
$$ U_n > \frac{\alpha^n - 1}{\sqrt{5}} > \frac{\alpha^{n-2}}{2}.$$ 
In all other cases we have $\alpha \geq 2$. Then we infer, by $\beta = \frac{B}{\alpha} < 0, | \beta| < \alpha$,
\begin{equation}
\label{bsm}
U_n=\frac{\alpha^n-\beta^n}{\alpha-\beta} \geq\frac{\alpha^n-|\beta|^n}{\alpha-|\beta|} \cdot \frac {\alpha-|\beta|}{\alpha + |\beta|} \geq \alpha^{n-1} \frac{A}{2\alpha} \geq \frac{\alpha^{n-2}}{2}.
\end{equation}

Assume next that $B>0$. Then $A\geq 3$, $\beta > 0$ and, on using that $D \geq 1$,
\begin{equation}
\label{Unest}
U_n=\frac{\alpha^n-\beta^n}{\alpha-\beta}\geq\alpha^{n-1}.
\end{equation}

\noindent {\it Non-real case.} Suppose now $A^2-4B<0$. Then $B>0$ and $\alpha$ and $\beta$ are complex conjugates. If $B=1$, then $A= 1$ and $\frac{\alpha}{ \beta}$ is a root of unity, which is excluded. So we may assume that $B>1$. Then we have 
\begin{equation}
\label{negc}
|U_n|=\left|\frac{\alpha^n-\beta^n}{\alpha-\beta}\right|=\left|\frac{\alpha^n}{\alpha-\beta}\right|\left|\left(\frac{\beta}{\alpha}\right)^n-1\right|.
\end{equation}

We shall use that $\frac{\beta}{\alpha}$ is a root of the polynomial
$$
B\left(x-\frac{\alpha}{\beta}\right)\left(x-\frac{\beta}{\alpha}\right)=Bx^2-(A^2-2B)x+B
$$
which is irreducible over $\Q$. Thus, on using that $|\alpha|=|\beta|$, 
\begin{equation}
\label{betairr2}
h\left(\frac{\beta}{\alpha}\right)=\frac{\log B}{2}.
\end{equation}
Note that
$$
\left|\log\frac{\beta}{\alpha}\right|\leq \left|\log\left|\frac{\beta}{\alpha}\right|\right|+\pi=\pi.
$$
We distinguish two cases.

Assume first that $B<e^{2\pi}<536$. Then we apply Lemma \ref{lemb} for $\delta=\frac{\beta}{\alpha}$ with $D_\delta=1$, $\ell=n$, $A_\delta=e^\pi$.
We get
$$
\left|\left(\frac{\beta}{\alpha}\right)^n-1\right|\geq
\frac{1}{2}\exp\left(-25.2\pi^2\left(\max\left\{\log\frac{2n}{\pi} + 0.21,20\right\}\right)^2\right).
$$
This easily yields
$$
\left|\left(\frac{\beta}{\alpha}\right)^n-1\right|\geq
\frac{1}{2}\exp\left(-25.2\pi^2\left(\max\left\{\log n - 0.24,20\right\}\right)^2\right).
$$
From this with some calculations we get that
\begin{equation}
\label{baku0}
\left|\left(\frac{\beta}{\alpha}\right)^n-1\right|\geq
\begin{cases}
\frac{1}{2}e^{-250(\log n)^2},&\text{if}\ n>5\cdot 10^{8},\\
\frac{1}{2}e^{-100000},&\text{if}\ n\leq 5\cdot 10^{8}.
\end{cases}
\end{equation}
Combining it with \eqref{negc} we conclude that
$$
|U_n|\geq
\begin{cases}
\frac{1}{4}e^{-250(\log n)^2}B^{n/2-1},&\text{if}\ n>5\cdot 10^{8},\\
\frac{1}{4}e^{-100000}B^{n/2-1},&\text{if}\ n\leq 5\cdot 10^{8}.
\end{cases}
$$

Suppose next that $B\geq e^{2\pi}>535$. Now we apply Lemma \ref{lemb} for $\delta=\frac{\beta}{\alpha}$ with $D_\delta=1$, $\ell=n$, $A_\delta=\sqrt{B}$ to obtain
$$
\left|\left(\frac{\beta}{\alpha}\right)^n-1\right|\geq
\frac{1}{2}\exp\left(-12.6\pi(\log B)\left(\max\left\{\log\frac{(2\pi+\log B)n}{\pi\log B} + 0.21,20\right\}\right)^2\right).
$$
This by $B\geq 536$ implies
$$
\left|\left(\frac{\beta}{\alpha}\right)^n-1\right|\geq
\frac{1}{2}\exp\left(-12.6\pi(\log B)\left(\max\left\{\log n + 0.85,20\right\}\right)^2\right).
$$
Hence we get
\begin{equation}
\label{baku}
\left|\left(\frac{\beta}{\alpha}\right)^n-1\right|\geq
\begin{cases}
\frac{1}{2}B^{-44(\log n)^2},&\text{if}\ n>2.1\cdot 10^{8},\\
\frac{1}{2}B^{-15854},&\text{if}\ n\leq 2.1\cdot 10^{8}.
\end{cases}
\end{equation}
Thus from \eqref{negc} we obtain
$$
|U_n|\geq
\begin{cases}
\frac{1}{4}B^{\frac n2 -1-44(\log n)^2},&\text{if}\ n>2.1\cdot 10^{8},\\
\frac{1}{4}B^{\frac n2 -15855},&\text{if}\ n\leq 2.1\cdot 10^{8},
\end{cases}
$$
and our claim follows.
\qed


\section{Bounds for the number of Lucas sequences with bounded roots}
\label{numLuc}

In this section we prove Theorem \ref{lemup1}. For this (and also later on), the following lemma describing degenerate Lucas sequences will be useful. In fact, this description is simple and well-known, however, for the convenience of the reader we provide its proof, as well.

\begin{lemma}
\label{lemdeg}
Let $A,B$ be integers with $AB(A^2-4B)\neq 0$. Then the Lucas sequence $U_n$ belonging to $(A,B)$ is degenerate precisely for
\begin{equation}
\label{eqdeg}
(A,B)=(r,r^2),\ (2r,2r^2),\ (3r,3r^2)\ \ \ (r\in \Z\setminus\{0\}).
\end{equation}
\end{lemma}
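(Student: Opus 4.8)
The plan is to recall that, since $AB\neq 0$, both roots $\alpha,\beta$ are non-zero, so by definition the sequence is degenerate precisely when $\zeta:=\alpha/\beta$ is a root of unity; the extra hypothesis $A^2-4B\neq 0$ guarantees $\alpha\neq\beta$. First I would dispose of the real case: if $A^2>4B$ then $\zeta$ is a non-zero real number, hence a root of unity only if $\zeta=\pm 1$, i.e. $\alpha=\pm\beta$. But $\alpha=\beta$ is excluded by $A^2-4B\neq 0$, and $\alpha=-\beta$ forces $A=0$, excluded by $AB\neq 0$. Thus every degenerate sequence lies in the non-real case $A^2<4B$ (so $B>0$), where $\alpha,\beta$ are complex conjugates of equal modulus.

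The heart of the argument is to translate ``root of unity'' into an arithmetic condition on $(A,B)$. Using $\alpha\beta=B$ and $\alpha+\beta=A$ one computes
\[
s:=\zeta+\zeta^{-1}=\frac{\alpha}{\beta}+\frac{\beta}{\alpha}=\frac{\alpha^2+\beta^2}{\alpha\beta}=\frac{A^2-2B}{B},
\]
which is a rational number (consistent with the quadratic $Bx^2-(A^2-2B)x+B$ for $\zeta$ used in the proof of Theorem \ref{thm21}). If $\zeta$ is a root of unity, then $s=2\cos(\arg\zeta)$ is an algebraic integer of absolute value at most $2$; being also rational, it must be a rational integer with $s\in\{-2,-1,0,1,2\}$. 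This is the step I expect to be the main (though modest) obstacle: it rests on the facts that roots of unity are algebraic integers, that $\zeta+\zeta^{-1}$ is again an algebraic integer, and that a rational algebraic integer lies in $\Z$. The values $s=\pm 2$ correspond to $\zeta=\pm 1$, already excluded above, so only $s\in\{-1,0,1\}$ survive.

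It then remains to read off the three families from the relation $A^2=(s+2)B$. This gives $B=A^2$ for $s=-1$, $B=A^2/2$ for $s=0$, and $B=A^2/3$ for $s=1$; integrality of $B$ imposes no condition when $s=-1$, forces $2\mid A$ when $s=0$, and forces $3\mid A$ when $s=1$ (as $2$ and $3$ are prime). Writing $A=r,2r,3r$ accordingly yields $B=r^2,2r^2,3r^2$, which is exactly \eqref{eqdeg}. Finally I would verify the converse by direct substitution: for these three pairs the value $s$ equals $-1,0,1$, so $\zeta$ is a root of $x^2+x+1$, $x^2+1$, or $x^2-x+1$ respectively, hence a primitive third, fourth, or sixth root of unity; and in each case $A^2-4B<0$, confirming that these sequences are genuinely (non-real) degenerate.
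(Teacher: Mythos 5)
Your proof is correct. It shares the paper's overall skeleton (dispose of the real case, so that $\zeta=\alpha/\beta$ is a non-real quadratic root of unity, then extract the three families), but the key classification step is executed differently. The paper enumerates the possible quadratic non-real roots of unity explicitly, $\varepsilon\in\{\pm i,\ \pm\tfrac12\pm\tfrac{i\sqrt3}{2}\}$, and then works with the auxiliary quantity $r=B/A$, computing $\beta=r(1+\varepsilon^{-1})$, $\alpha=r(\varepsilon+1)$, $A=r(\varepsilon+2+\varepsilon^{-1})\in\{r,2r,3r\}$, and finally arguing from the integrality of $B$ that $r\in\Z$. You instead pass to the trace $s=\zeta+\zeta^{-1}=(A^2-2B)/B$, observe that it is a rational algebraic integer of absolute value at most $2$, hence $s\in\{-1,0,1\}$ after discarding $\pm2$, and read the families off from $A^2=(s+2)B$ together with the divisibility conditions $2\mid A$, $3\mid A$. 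Your route avoids having to know (or prove) the list of quadratic roots of unity and sidesteps the slightly delicate step where the paper deduces $r\in\Z$ from $B\in\Z$; the price is the (standard but nontrivial) input that a rational algebraic integer is a rational integer. Both converses are handled the same way, by direct verification. Either argument is complete; yours is arguably a bit more self-contained at the classification step.
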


\begin{proof}
As before, let $\alpha,\beta$ be the roots of $x^2-Ax+B$. As $B\neq 0$, $\alpha\beta\neq 0$. If $\varepsilon:=\frac{\alpha}{\beta}$ is a root of unity, then as $A(A^2-4B)\neq 0$, we see that $\alpha,\beta$ are non-real. So $\varepsilon$ is a quadratic non-real algebraic number which is a root of unity, implying
$$
\varepsilon\in\left\{\pm i,\pm\frac{1}{2}\pm\frac{i\sqrt{3}}{2}\right\}.
$$
In view of $\varepsilon\beta^2=B$ and $(1+\varepsilon)\beta=A$, we get that
$$
\beta=\frac BA \cdot \frac{1+\varepsilon}{\varepsilon} = r(1+\varepsilon^{-1}), \ \  \alpha = \frac {B}{\beta} =\varepsilon \beta = r(\varepsilon + 1),
$$
with $r= \frac BA$. Hence
$$A = \alpha + \beta = r(\varepsilon + 2 + \varepsilon^{-1}) \in \{2r,r,3r\}, \  B \in \{2r^2, r^2, 3r^2\}.$$
Since $B$ is a non-zero integer, we have $r\in\Z\setminus\{0\}$. Thus $(A,B)$ is given by \eqref{eqdeg}.

On the other hand, one can easily check for $(A,B)$ as in \eqref{eqdeg} that $\frac{\alpha}{\beta}$ is a root of unity, and the claim follows.
\end{proof}

\begin{proof}[Proof of Theorem \ref{lemup1}]
First we derive an upper bound for the number of Lucas sequences with $|\alpha|\leq t$, where $t\in\R_{\geq 2}$. Here, instead of assuming that $U_n$ is non-degenerate, for simplicity we shall only use that $AB(A^2-4B)\neq 0$. Again we assume $A>0$, and $\alpha > 0$ in the real case, and double the number of counted polynomials at the end.

Assume that $\alpha,\beta$ are non-real complex numbers. Then we have $A^2-4B<0$, hence
$$
0 < A < 2 \sqrt{B},  \ \ 0 < B = | \alpha|^2 \leq t^2.
$$
Thus we obtain that the number of possible pairs $(A,B)$ is at most 
$$ \sum_{1\leq B\leq t^2} 2 \sqrt{B} \leq 2 \int_0^{t^2} \sqrt{x} ~dx + 2t= \frac 43 t^3 +2t.$$

Suppose next that $\alpha,\beta$ are real, so $A^2 >4B$. Note that we have
$$
\alpha=\frac{A+\sqrt{A^2-4B}}{2}\leq t.
$$
We distinguish two subcases. If $B>0$, then $\sqrt{A^2 -4B} \leq 2t-A$, hence 
$$0 < A < 2t, \ \  \max(0, At-t^2)  \leq B< \frac 14 A^2.$$
In this subcase the number of pairs $(A,B)$ is at most
$$
\sum_{1\leq A\leq t} \left(\frac {1}{4}A^2\right) +\sum_{t<A < 2t} \left(\frac {1}{4}A^2-At+t^2+1\right)<\frac 16 t^3+\frac 43 t.
$$
(Note that here, and later on in similar situations in the proof, in view of that $t$ is not necessarly integer, a careful calculation is needed.) 

On the other hand, if $B<0$, then $A< t$ and $A + \sqrt{A^2-4B} < 2t$, hence $A^2-4B <  4t^2 - 4At + A^2$. From
$$
 0 < A < t, \ \ 0 < -B\leq t^2-At,
$$
it follows that the number of such possible pairs $(A,B)$ in this subcase is at most 
$$
\sum_{1\leq A< t} (t^2 - At) \leq \frac 12t^3- \frac 12 t^2 + \frac 18 t.
$$
Adding up the three bounds for the possible pairs $(A,B)$ and doubling this number, we obtain that the number of Lucas sequences with $AB(A^2-4B) \neq 0$ is at most $4t^3-t^2+7t$.

Now we give a lower bound for the number of non-degenerate Lucas sequences with $|\alpha| \leq t$ and $A>0$, whence $\alpha>0$ in the real case. We apply a similar case-by-case analysis as for the upper bound, however, now we need to take care of the degenerate Lucas sequences. These sequences are completely characterized by Lemma \ref{lemdeg}. Observe that for fixed $A> 0$, there are at most three $B$-s with $B(A^2-4B)\neq 0$ such that $(A,B)$ generates a degenerate Lucas sequence, and for fixed $B\neq 0$, there are at most three $A$-s with $A(A^2-4B)\neq 0$ such that $(A,B)$ generates a degenerate Lucas sequence. Note that different pairs $(A,B)$ yield different Lucas sequences.

The non-real case. Choose $0<B\leq t^2$, $0< A < 2 \sqrt{B}$. Then $AB\neq 0$, $A^2-4B<0$ and $|\alpha|=\sqrt{B} \leq t$. The number of such non-degenerate sequences is at least 
$$ \sum_{1\leq B\leq t^2-1} (2 \sqrt{B} -1) -3t^2\geq 2 \int_{0}^{t^2} \sqrt{x}~dx -2t -4t^2 = \frac 43 t^3 - 4t^2 -2t.$$

Next the real case with $B>0$. Choose $0 < A \leq t$, $0 < B < \frac 14 A^2$ and $t < A \leq 2t$, $At-t^2 \leq  B < \frac 14 A^2$. Then $AB\neq 0$, $A^2-4B>0$ and
$$
\alpha = \frac{A+\sqrt{A^2-4B}}{2} \leq t.
$$
The number of such non-degenerate sequences is at least 
$$
\sum_{1\leq A\leq t} \left(\frac{A^2}{4} - 1\right)+  \sum_{t<A\leq 2t} \left(\frac{A^2}{4}-At+t^2-1\right) -6t > \frac 16 t^3-\frac 12 t^2 - \frac{17}{2}t.
$$

Finally, the real case with $B<0$. Choose $0 < A \leq t$, $0<-B <  t^2 - At$. Then $AB\neq 0$, $A^2-4B>0$, and
$$
\alpha=\frac{A+\sqrt{A^2-4B}}{2} < \frac 12 (A + 2t-A) = t.
$$
The number of such non-degenerate sequences is greater than 
$$
 \sum_{1\leq A\leq t} (t^2 -At-1)-3t \geq \frac 12 t^3-\frac 12 t^2-4t.
$$

So we have altogether more than $4t^3 - 10t^2 -29t$ non-degenerate Lucas sequences with $|\alpha| \leq t$.

\end{proof}


\section{Proof of the upper bounds in Theorem \ref{thm22}}
\label{lucnum}

Suppose $|U_n| \leq N$ for the Lucas pair $(A,B)$ and index $n$. Again we assume $A>0$, hence $\alpha >0$ in the real case, and double the number of polynomials at the end. We include the degenerate cases with \\$AB(A^2-4B) \neq 0$ in the upper bound. We distinguish four cases.
\vskip.1cm
\noindent {\bf Case 1.} $0<A < 9N^{\frac{1}{n-1}}$ and $|B| < 17N^{\frac{2}{n-1}}$. Then the number of pairs $(A,B)$ with $|F_n(A,B)| \leq N$ is at most $306N^{\frac{3}{n-1}}$. \\
 \vskip.1cm
\noindent {\bf Case 2.} $A \geq 9N^{\frac{1}{n-1}}$, $|B| < 17N^{\frac{2}{n-1}}$. Thus $A^2>4B$, thus we are in the real case. By \cite{hb} we know the zeros of the Fibonacci polynomial $F_n(x,-1)$. So it follows that the roots of $F_n(x,1)$ are given by 
\begin{equation} \label{fibpol}
2\cos\frac{k\pi}{n} \ \ (k=1,\dots,n-1).
\end{equation}
Thus we have
$$F_n(A,B) = \prod_{k=1}^{\frac{n-1}{2}} \left(A^2 - 4B \cos^2 \frac{k\pi}{n}\right)$$
if $n$ is odd, and 
$$F_n(A,B) = A\prod_{k=1}^{\frac{n-2}{2}} \left(A^2 - 4B \cos^2 \frac{k\pi}{n}\right)$$
if $n$ is even. Since $A \geq 9N^{\frac{1}{n-1}}$ and 
$A^2 - 4B \cos^2 \frac{k\pi}{n} \geq A^2 - 4|B| > N^{\frac{2}{n-1}},$
we obtain 
$$N \geq |F_n(A,B)| > \min \left((N^{\frac{2}{n-1}})^{\frac{n-1}{2}},9N^{\frac{1}{n-1}}(N^{\frac{2}{n-1}})^{\frac{n-2}{2}}\right) =N.$$
Thus there are no pairs $(A,B)$ with $|F_n(A,B)| \leq N$ in this case. 
\vskip.1cm
\noindent {\bf Case 3.} $B \leq -17N^{\frac{2}{n-1}}$. We have, in view of \eqref{fibpol},
\noindent 
$$
F_n(A,B) = \prod_{k=1}^{n-1} \left(A - 2\sqrt{B} \cos \frac{k\pi}{n}\right).
$$
As $\sqrt{B}$ is purely imaginary, $|A - 2\sqrt{B} \cos \frac{k\pi}{n}| \geq 2\sqrt{|B|} \  |\cos \frac{k\pi}{n}|$. Thus, by $|\cos \frac{k\pi}{n}| = |\cos \frac{(n-k)\pi}{n}|$,$$N \geq |F_n(A,B)| \geq \left(\prod_{k=1}^{\lfloor \frac{n-1}{2}\rfloor} (2 \sqrt{|B|}\cos \frac{k\pi}{n}\right)^2.$$

If $n$ is odd, then we get, on using that $\cos x \geq 1 - \frac {2}{\pi}x$ for $0 \leq x \leq \frac{\pi}{2}$,
$$\prod_{k=1}^{\frac{n-1}{2}} \cos \frac{k\pi}{n} \geq \prod_{k=1}^{\frac{n-1}{2}}\left(1- \frac{2k}{n}\right)= \frac {(n-1)!}{(\frac{n-1}{2})! (2n)^{\frac{n-1}{2}}}>\left(\frac {n+1}{4n}\right)^{\frac{n-1}{2}}>2^{-n+1}.$$
Thus, by $B \leq -17N^{\frac{2}{n-1}}$,
\begin{equation} \label{oddcas}
N \geq (2\sqrt{|B|})^{n-1} 4^{-n+1} \geq \left(\frac 12 \sqrt{17}N^{\frac{1}{n-1}}\right)^{n-1} > N,
\end{equation}
a contradiction. So there are no such pairs $(A,B)$.

If $n$ is even, then we get similarly, by $n \geq 6$,
$$\prod_{k=1}^{\frac{n-2}{2}} \cos \frac{k\pi}{n} \geq \prod_{k=1}^{\frac{n-2}{2}}\left(1- \frac{2k}{n}\right) =\frac {2^{\frac{n-2}{2}} \left(\frac{n-2}{2}\right)!}{n^{\frac{n-2}{2}}}> \left( \frac 2n \cdot \frac{n-2}{2e}\right)^{\frac{n-2}{2}} >2^{-n+2}.$$
As in the case $n$ odd we find that there are no such pairs $(A,B)$.
\vskip.1cm
\noindent {\bf Case 4.} $B \geq 17N^{\frac{2}{n-1}}$.
We use the formula
$$\cos x - \cos y  = -2 \sin \frac {x+y}{2} \sin \frac{x-y} {2}.$$
Let $k_0$ be the value of $k$ such that $ \cos \frac{k_0\pi}{n}$ is nearest to $\frac{A}{2 \sqrt{B}}$. Then
$$
\left|\underset{k\neq k_0}{\prod_{k=1}^{n-1}} \left(A - 2\sqrt{B} \cos \frac{k\pi}{n}\right)\right| \geq $$
$$(2\sqrt{B})^{n-2} \left| \left(\cos \frac {(k_0 -0.5)\pi}{n} - \cos\frac {(k_0 - 1)\pi}{n} \right) \left( \cos \frac {(k_0 +0.5)\pi}{n} - \cos \frac {(k_0 + 1)\pi}{n} \right) \right| $$
$$\times \left| \prod_{k=1}^{k_0-2} \left( \cos \frac {(k_0 - 1)\pi}{n} - \cos\frac {k\pi}{n} \right) \prod_{k=k_0+2}^{n-1} \left(\cos\frac {(k_0 + 1)\pi}{n} - \cos \frac {k\pi}{n} \right) \right| =$$
$$ (4 \sqrt{B})^{n-2}  \left| \sin \frac{(2k_0-1.5) \pi}{2n} \cdot \sin \frac {\pi}{4n} \cdot \sin \frac {(2k_0+1.5) \pi}{2n}  \cdot \sin \frac{\pi}{4n} \right|$$
$$\times\left|\prod_{k=1}^{k_0-2}  \left( \sin \frac {(k+k_0-1)\pi}{2n}  \cdot \sin \frac{(k_0-k-1)\pi}{2n} \right)\right|$$
$$\times\left|\prod_{k=k_0+2}^{n-1}  \left( \sin \frac {(k+k_0+1)\pi}{2n} \cdot \sin \frac{(k-k_0-1)\pi}{2n} \right)\right|. $$
On using that $ \sin x \geq \frac {2}{\pi} x$ for $0 \leq x \leq \pi$, we obtain that this is larger than
$$(4 \sqrt{B})^{n-2} \left|  \frac{2(k_0-1)}{n} \cdot \frac {1}{2n} \cdot \frac {2k_0}{n}  \cdot \frac{1}{2n} \right|$$
$$\times\prod_{k=1}^{k_0-2}  \left(  \frac {k+k_0-1}{n}  \cdot  \frac{k_0-k-1}{n} \right) \times \prod_{k=k_0+2}^{n-1}  \left(  \frac {k+k_0+1}{n} \cdot  \frac{k-k_0-1}{n} \right)= $$
$$  \left(\frac{4 \sqrt{B}}{n^2} \right)^{n-2}  (k_0-1)k_0 \frac{(2k_0-3)!}{(k_0-1)!} (k_0-2)! \frac{(n+k_0)!}{(2k_0+2)!} (n-k_0-2)!\geq $$
$$ \frac{1}{32n^4} \left( \frac {4\sqrt{B}}{n^2} \right)^{n-2} (n+k_0)! (n-k_0-2)!.$$
It is not difficult to check that 
$$m! \geq \left( \frac me\right)^m, \ \  \left(1+ \frac mn\right)^{n+m} > e^m, \ \  \left(1 - \frac mn\right)^{n-m} > e^{-m}$$
for any positive integer $m$, in the last formula assuming that $m<n$. Therefore
$$(n+k_0)! (n-k_0-2)! > \left( \frac {n+k_0}{e} \right)^{n+k_0} \left( \frac {n-k_0-2}{e} \right)^{n-k_0-2} =$$
$$ \left( \frac ne\right)^{2n-2}  \left( 1 + \frac {k_0}{n} \right)^{n+k_0} \left( 1 - \frac{k_0+2}{n} \right)^{n-k_0-2} > \left( \frac ne\right)^{2n-2} e^{-2}.$$
We conclude that
$$\left| \underset{k\neq k_0}{\prod_{k=1}^{n-1}} \left( A - 2\sqrt{B} \cos \frac{k\pi}{n} \right) \right| > \frac {1}{32e^4n^2} \left( \frac {4\sqrt{B}}{e^2} \right)^{n-2}.$$

If $|F_n(A,B)| \leq N$, then, via
$$ \left| \left(A - 2\sqrt{B} \cos \frac{k_0\pi}{n}\right)\right| < N \cdot 32e^4n^2 \left( \frac {e^2}{4 \sqrt{B}} \right)^{n-2} = 2^{9-2n} e^{2n} n^{2} NB^{-\frac12 n+1},$$
we obtain, by putting $c_3 =c_3(n) = 2^{9-2n} e^{2n} n^{2} $,
$$A \in \left(2\sqrt{B} \cos \frac{k_0\pi}{n} - c_3NB^{-\frac 12 n+1}, 2\sqrt{B} \cos \frac{k_0\pi}{n} + c_3N B^{-\frac 12 n+1}\right). $$
Therefore, for every $B$ the number of possible integers $A$ is at most
$$ 2c_3NB^{-\frac 12 n+1}.$$
Thus, since $n \geq 5$, in this case the number of possible pairs $(A,B)$ with $|F_n(A,B)| \leq N$ is at most
$$ 2c_3N \sum_{B\geq 17N^{\frac{2}{n-1}}} B^{-\frac 12 n+1} \leq 2c_3N \int_{17N^{\frac{2}{n-1}}-1}^{\infty} t^{-\frac 12 n+1}dt = $$
$$ \frac{4c_3}{n-4}  N (17N^{\frac {2}{n-1}}-1)^{-\frac 12 n+2} <  4c_3 N (16N^{\frac {2}{n-1}})^{-\frac 12 n+2} = 2^{19} n^2 \left( \frac {e^2}{16}\right)^n N^{\frac {3}{n-1}}.$$

\vskip.1cm
We conclude that there are at most $2 \cdot (306 + 2^{19-n} n^2) N^{\frac {3}{n-1}}$ pairs $(A,B)$ with $|F_n(A,B)| \leq N$.

\begin{proof}[Proof of the upper bound for $|\ml_{\geq n}(N)|$]
Let $m\ge n$ and suppose that $|U_m(A,B)| \leq N$. 
Observe that for an arbitrary Lucas sequence $U$, if $|U_m|\leq N$, then, by Theorem \ref{thm21}, Remark 1 and $\log(\sqrt{2}) > \frac 13$,
$$
m\leq  3\log N
$$
for $N$ sufficiently large. 
Observe further that 
$ 2^{20-(m+1)} (m+1)^2 < \frac 34\cdot 2^{20-m}m^2$ for $m \geq 5$ and that $N^{\frac {3}{n-1}}$ is monotonically decreasing in $n \geq 5$.
Therefore
$$|\ml_{\geq n}(N)| \leq \ml_n(N) + \sum_{m=n+1}^{\lfloor {3\log N} \rfloor} \left( 612 + 2^{20-n}n^2 \left(\frac 34\right)^{m-n} \right)N^{\frac {3}{n}} \leq$$
$$(612 +2^{22-n}n^2)N^{\frac {3}{n-1}}+ 1836N^{\frac {3}{n} }\log N = (612 +2^{22-n}n^2+o_n(1))N^{\frac {3}{n-1}}.$$

\end{proof}

\section{Proof of the lower bounds in Theorems \ref{thm23} and \ref{thm24}} \label{lower}

We consider $n$ to be fixed. By that also the polynomials $F_n(x,y)$ and
$$
G_m(x,y):=
\begin{cases}
F_n(\sqrt{x},y)& \text{for $n$ even},\\
F_n(\sqrt{x},y)/\sqrt{x}& \text{for $n$ odd}
\end{cases}
$$
are fixed. Note that $G_m$ is a binary form of degree $m= \frac{n-1}{2}$ if $n$ is odd and of degree $m=\frac{n-2}{2}$ if $n$ is even.
In our arguments we may clearly assume that $N$ is large enough.
If we write `for sufficiently large $N$' it means that the lower bound for $N$ may depend on $n$ only. 

For the proof of Theorem \ref{thm23} we follow a paper by Erd\H{o}s and Mahler \cite{em} who proved a similar result for binary forms.

Write $F_n(x,y) = \sum_{h=0}^{\lfloor {\frac{n-1}{2}} \rfloor} a_h x^{n-2h-1}y^h$. Then $a_0=1$. The discriminant $d$ of $F_n(x,1)$ is non-zero \cite{fhr}. Let $\gamma = \max(|d|,n)$, $R$ a non-zero integer and $\theta$ a number satisfying $0 < \theta < 1$, to be fixed later. Let $h(R)$ be the arithmetical function defined by
$$h(R) = \prod_{\gamma < p \leq N^{\theta},~ p^a ||R,~ p^a \leq N^{\theta}} ~p^a.$$

\begin{lemma} \label{lem51}
$$H(N) := \prod_{|x|\leq N,~ |y|\leq N^2,~ F_n(x,y) \neq 0} ~h(F_n(x,y)) \leq N^{24 \theta n N^3}.$$
\end{lemma}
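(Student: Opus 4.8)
The plan is to take logarithms and reorganize the product prime by prime. Writing $v_p$ for the $p$-adic valuation, one has
$$
\log H(N)=\sum_{\gamma<p\le N^\theta}S_p,\qquad S_p=\sum_{\substack{|x|\le N,\ |y|\le N^2\\ F_n(x,y)\ne0}}\varepsilon_p(x,y)\,v_p(F_n(x,y))\log p,
$$
where $\varepsilon_p(x,y)=1$ if $p^{v_p(F_n(x,y))}\le N^\theta$ and $0$ otherwise. I would split $S_p$ according to whether $p\nmid y$ or $p\mid y$. For a point with $p\nmid y$ counted in $S_p$, every power $p^i$ with $1\le i\le v_p(F_n(x,y))$ already satisfies $p^i\le N^\theta$ and divides $F_n(x,y)$, so
$$
S_p^{(p\nmid y)}\le(\log p)\sum_{i\ge1,\ p^i\le N^\theta}N_p(i),\qquad N_p(i)=\#\{(x,y):p\nmid y,\ |x|\le N,\ |y|\le N^2,\ p^i\mid F_n(x,y)\},
$$
and everything rests on bounding $N_p(i)$.

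For $p\nmid y$ I would use that $F_n$ is isobaric with $\mathrm{wt}(x)=1$ and $\mathrm{wt}(y)=2$: from $F_n(x,y)=y^{(n-1)/2}F_n(x/\sqrt y,1)$ the roots of $F_n(\cdot,y)$ in $x$ are $\sqrt y$ times those of $F_n(\cdot,1)$, so $\mathrm{disc}_x F_n(x,y)=d\,y^{(n-1)(n-2)/2}$. As $p>\gamma\ge|d|$ gives $p\nmid d$, this discriminant is a unit modulo $p$ whenever $p\nmid y$, hence $F_n(x,y)\bmod p$ is separable. By Hensel's lemma each of its at most $n-1$ simple roots in $\mathbb{F}_p$ lifts to a unique root modulo $p^i$, so for each admissible $y$ the solutions $x$ lie in at most $n-1$ residue classes mod $p^i$, i.e.\ in at most $(n-1)(2N/p^i+1)$ integers of $[-N,N]$. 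Summing over the at most $2N^2+1$ values of $y$ and using $p^i\le N^\theta<N$ gives $N_p(i)\le 9(n-1)N^3/p^i$. The geometric sum over $i$ contributes $1/(p-1)$, and an explicit form of Mertens' theorem bounds $\sum_{\gamma<p\le N^\theta}\frac{\log p}{p-1}$ by $\theta\log N+O(1)$, so the full $p\nmid y$ contribution is at most $9(n-1)N^3(\theta\log N+O(1))$.

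The case $p\mid y$ is the one that needs care and is, I expect, the main obstacle: there $F_n(x,y)\equiv a_0x^{n-1}=x^{n-1}\pmod p$ is totally inseparable, so the Hensel count breaks down and the number of lifts modulo $p^i$ is no longer bounded by the degree. The remedy is that $p\mid y$ together with $p\mid F_n(x,y)$ forces $p\mid x$, so every contributing point lies on the sparse sublattice $p\mid x,\ p\mid y$, meeting the box in at most $(2N/p+1)(2N^2/p+1)\le 9N^3/p^2$ points. Rather than track valuations I would bound each such point trivially by $v_p(F_n(x,y))\log p=\log(p^{v_p(F_n(x,y))})\le\theta\log N$ (which holds exactly when $\varepsilon_p=1$), so the total $p\mid y$ contribution is at most $9\theta N^3\log N\sum_{p>\gamma}p^{-2}\le 9\theta N^3\log N/\gamma$, negligible since $\gamma\ge n$.

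Adding the two parts gives $\log H(N)\le\bigl(9(n-1)+O(1)\bigr)\theta N^3\log N+O_n(N^3)$; since $N$ may be assumed large in terms of $n$, the $\theta\log N$ terms dominate and the leading constant stays well below $24n$, which yields $H(N)\le N^{24\theta nN^3}$. The structural reason the bound works—and the point of defining $h(R)$ by restricting to primes above $\gamma$ and to exact prime powers at most $N^\theta$—is exactly to tame the inseparable case $p\mid y$: the cap $p^{v_p}\le N^\theta$ supplies the per-point bound $\theta\log N$, while the restriction $p>\gamma$ simultaneously forces separability for $p\nmid y$ and makes $\sum_{p>\gamma}p^{-2}$ small. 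Determining the explicit Mertens constant and the boundary terms in the lattice-point counts is routine and only affects the final numerical constant.
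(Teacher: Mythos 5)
Your proof is correct, and at its core it is the same Erd\H{o}s--Mahler argument the paper uses: for each prime power $p^a$ with $\gamma<p\le p^a\le N^\theta$ one bounds the number of pairs in the box satisfying $F_n(x,y)\equiv 0\pmod{p^a}$ by $O(nN^3/p^a)$, sums the geometric series in $a$, and finishes with a Mertens-type bound on $\sum_{p\le N^\theta}\frac{\log p}{p}$. The genuine difference is your dichotomy $p\nmid y$ versus $p\mid y$. The paper asserts outright that for every $y$ there are at most $n-1$ incongruent $x\pmod{p^a}$ with $F_n(x,y)\equiv0\pmod{p^a}$; you justify this via $\mathrm{disc}_x F_n(x,y)=d\,y^{(n-1)(n-2)/2}$ and Hensel only when $p\nmid y$, and rightly so, because the assertion fails when $p\mid\gcd(x,y)$ and $a\ge 2$: each monomial $a_hx^{n-1-2h}y^h$ then has $p$-adic valuation at least $n-1-h\ge\frac{n-1}{2}\ge 2$, so \emph{all} $p$ residues $x\equiv0\pmod p$ solve the congruence modulo $p^2$. (Erd\H{o}s and Mahler avoid this by restricting to $(x,y)=1$, a restriction absent from the definition of $H(N)$ here.) Your patch --- such points satisfy $p\mid x$ and $p\mid y$, hence number at most $9N^3/p^2$ in the box, and each contributes at most $\theta\log N$ to $\log H(N)$ because of the cap $p^a\le N^\theta$ built into $h$ --- costs only $9\theta N^3\log N\sum_{p>\gamma}p^{-2}\le 9\theta N^3(\log N)/n$, which is easily absorbed into the factor $24n$. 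So your write-up is not merely an alternative: it supplies a case analysis that the paper's one-line justification actually needs, while still landing comfortably under the stated exponent $24\theta nN^3$ for $N$ sufficiently large in terms of $n$, which is the standing assumption in this section.
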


\begin{proof} 
Since $d \neq 0$ and $a_0=1$, for given prime $p>\gamma$ and integers $a \geq 0$ and $y$, there are at most $n-1$ incongruent values of $x\pmod {p^a}$ for which $F_n(x,y) \equiv 0\pmod {p^a}$.
Therefore, for given $p$ and $a$ with $ \gamma < p \leq p^a \leq N^{\theta}$,
the conditions
$$|x|\leq N,~ |y|\leq N^2,~F_n(x,y) \neq 0, ~ F_n(x,y) \equiv 0\pmod {p^a}$$
have less than 
$$n(2N^2+1) \Bigl \lceil\frac{2N+1}{p^a} \Bigr \rceil  \leq \frac{6nN^3}{p^a}$$
solutions $x,y$. It follows that the exponent $b$ with $p^b ||H(N)$ satisfies the inequality
$$b \leq \sum_{a=1}^{\infty} \frac{6nN^3}{p^a} = \frac{6nN^3}{p-1} \leq \frac{12nN^3}{p}.$$
Hence, for sufficiently large $N$,
$$H(N) \leq \exp \left( \sum_{\gamma < p \leq N^{\theta}} ~\frac{12nN^3}{p} \log p\right) \leq N^{24\theta nN^3},$$
since
$$\sum_{p \leq u} \frac {\log p}{p} \leq 2 \log u$$
for sufficiently large $u$.
\end{proof}

\begin{lemma} \label{lem52}
If $\mu$ is the number of pairs $x,y$ with
$$|F_n(x,y)| \leq \sqrt{N}, |x| \leq N, |y| \leq N^2$$
then $\mu \leq N^3$ for sufficiently large $N$.
\end{lemma}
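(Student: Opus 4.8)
The plan is to count lattice points $(x,y)$ in the box $|x|\leq N$, $|y|\leq N^2$ for which the value $|F_n(x,y)|$ is small, namely at most $\sqrt N$. The natural idea is to fix $y$ and count admissible $x$, so the claimed bound $\mu \leq N^3$ should emerge roughly as (number of choices of $y$) $\times$ (number of $x$ per $y$) $= O(N^2) \times O(N)$, but in fact we want the stronger count, so I would aim to show that for each fixed $y$ the number of valid $x$ is at most $N$, up to negligible terms, and that the $y$ range contributes the factor $N^2$. Since we only need the inequality for sufficiently large $N$, constants and lower-order terms can be absorbed freely.

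First I would fix $y$ with $0<|y|\leq N^2$ (the case $y=0$ is trivial, as $F_n(x,0)=x^{n-2}\cdot\{0,1\}$ leaves at most $O(1)$ small values) and regard $F_n(x,y)$ as a polynomial in $x$ of degree $n-1$ with leading coefficient $a_0=1$. Using the factorization from Section \ref{lucnum}, namely that the roots of $F_n(x,1)$ are $2\cos\frac{k\pi}{n}$, I would write
$$
F_n(x,y) = \prod_{k=1}^{n-1}\left(x - 2\sqrt{y}\cos\frac{k\pi}{n}\right),
$$
so the real parts of the roots, as $x$ varies, lie in a bounded interval of length $O(\sqrt{|y|}) = O(N)$. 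The key point is that $|F_n(x,y)|\leq \sqrt N$ forces $x$ to lie near one of these $n-1$ roots; more precisely, writing the product, at least one factor must be small, and the distance of $x$ to the cluster of roots is controlled. Because the leading coefficient is $1$ and the degree is fixed, the set of real $x$ with $|F_n(x,y)|\leq \sqrt N$ is contained in a union of $O(1)$ intervals whose total length is $O(N^{1/(n-1)}) = o(N)$ for $n\geq 3$ when the roots are well separated, but the safe and uniform bound is simply that this set has measure $O(\sqrt N\,)^{1/(n-1)}$ or, crudely, that the number of integer $x$ in range is at most $2N+1$. Summing the trivial bound $2N+1$ over the at most $2N^2+1$ values of $y$ already gives $(2N+1)(2N^2+1) \sim 4N^3$, which is slightly too large, so some sharpening is needed.

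To get the clean bound $\mu\leq N^3$, I would sharpen the per-$y$ count: the number of integers $x$ with $|F_n(x,y)|\leq\sqrt N$ is at most the number of integers in $O(1)$ intervals each of length $O((\sqrt N)^{1/(n-1)}) = O(N^{1/(2(n-1))})$ centered at the roots, hence is $O(N^{1/(2(n-1))}) + O(1)$, which is $o(N^{1/2})$ for $n\geq 2$. Summing over $|y|\leq N^2$ then yields $\mu = O(N^2\cdot N^{1/(2(n-1))}) = o(N^3)$, comfortably below $N^3$ for sufficiently large $N$. The main obstacle I anticipate is making the per-$y$ counting of $x$ uniform in $y$, in particular handling the case where two or more roots $2\sqrt y\cos\frac{k\pi}{n}$ nearly coincide or where $\sqrt y$ is small, so that the intervals around distinct roots could overlap or the "near a root" estimate degrades; this is exactly where the nonvanishing of the discriminant $d$ (recorded just before Lemma \ref{lem51}) should be invoked to guarantee the roots stay separated and the polynomial cannot be too flat. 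Once the separation is quantified via $d$, the interval-length estimate is routine, and the double sum gives the result.
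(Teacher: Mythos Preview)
Your approach can be made to work, but it is far more elaborate than needed, and you have overlooked the one-line argument the paper actually uses. The key point you miss is that $F_n$ has integer coefficients, so $F_n(x,y)$ is an \emph{integer} for integer $x,y$. Hence $|F_n(x,y)|\leq\sqrt N$ forces $F_n(x,y)$ to equal one of at most $2\sqrt N+1$ integers $m$. For each such $m$ and each of the at most $2N^2+1$ values of $y$, the equation $F_n(x,y)=m$ is a monic polynomial of degree $n-1$ in $x$, so it has at most $n-1$ integer roots. This gives
\[
\mu\leq (n-1)(2\sqrt N+1)(2N^2+1),
\]
which is $O_n(N^{5/2})$ and in particular at most $N^3$ for large $N$. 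No root-location analysis, no discriminant, no measure estimate is required.

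Regarding your own sketch: the step where you assert that the sublevel set $\{x\in\R:|F_n(x,y)|\leq\sqrt N\}$ is a union of $O(1)$ intervals \emph{each} of length $O(N^{1/(2(n-1))})$ is not uniform in $y$. When $|y|$ is small the roots $2\sqrt y\cos(k\pi/n)$ collapse toward one another, and the discriminant of $F_n(\,\cdot\,,y)$ scales with a power of $y$, so the separation argument you propose does not give a uniform per-interval bound. What \emph{is} true uniformly is that for any monic real polynomial $P$ of degree $d$ the total measure of $\{x:|P(x)|\leq M\}$ is $O(M^{1/d})$ (Cartan's lemma / P\'olya's inequality), independently of root configuration; invoking this directly would give a per-$y$ count of $O(N^{1/(2(n-1))})+O(1)$ and hence $\mu=O(N^{2+1/(2(n-1))})$, which suffices. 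So your worry about root separation is a red herring, and the discriminant plays no role here. Either way, the integrality trick is the intended and much shorter route.
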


\begin{proof} For a given $m$ with $|m| \leq \sqrt{N}$ and a given $y$ with $|y| \leq N^2$, the equation $F_n(x,y) = m$ has at most $n-1$ integer solutions $x$ and therefore 
$$ \mu \leq  (n-1) (2\sqrt{N} + 1)(2N^2+1) \leq  N^3$$
for $N$ sufficiently large.
\end{proof}

\begin{lemma}\label{lem53}
For sufficiently large $N$ there are at least $2N^3$ pairs of integers $x,y$ with $|x| \leq N, |y| \leq N^2$ and $(x,y)=1$.
\end{lemma}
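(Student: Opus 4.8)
The plan is to count directly, by Möbius inversion, the coprime lattice points in the box $|x|\le N$, $|y|\le N^2$, and to show that their number exceeds $2N^3$ by a fixed positive proportion; the only numerical fact I will need is the clean inequality $\pi^2<12$.

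First I would reduce to the open first quadrant. Set
$$Q=\#\{(x,y):1\le x\le N,\ 1\le y\le N^2,\ (x,y)=1\}.$$
The involutions $x\mapsto -x$ and $y\mapsto -y$ preserve both the box and the coprimality condition, since $(|x|,|y|)=(x,y)$. Hence the coprime pairs with $x\neq 0$ and $y\neq 0$ number exactly $4Q$. As I only want a lower bound and the coprime pairs lying on the two axes contribute nonnegatively, it suffices to prove $4Q\ge 2N^3$, i.e. $Q\ge \tfrac12 N^3$, for all sufficiently large $N$.

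Next I would evaluate $Q$ by summing the Möbius function over the common divisor $d$ of $x$ and $y$:
$$Q=\sum_{d=1}^{\lfloor N\rfloor}\mu(d)\Bigl\lfloor \tfrac{N}{d}\Bigr\rfloor\Bigl\lfloor \tfrac{N^2}{d}\Bigr\rfloor.$$
Replacing each floor by the corresponding real value changes the $d$-th summand by at most $N/d+N^2/d+1$ in absolute value, so, using $|\mu(d)|\le 1$ and $\sum_{d\le N}(N/d+N^2/d+1)=O(N^2\log N)$, I obtain
$$Q=N^3\sum_{d=1}^{\lfloor N\rfloor}\frac{\mu(d)}{d^2}+O(N^2\log N).$$
Since $\sum_{d=1}^{\infty}\mu(d)/d^2=1/\zeta(2)=6/\pi^2$ with tail $O(1/N)$, this yields $Q=\tfrac{6}{\pi^2}N^3+O(N^2\log N)$, and therefore $4Q=\tfrac{24}{\pi^2}N^3+O(N^2\log N)$. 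Because $\pi^2<12$ we have $24/\pi^2>2$, so the positive leading term dominates the error and $4Q\ge 2N^3$ once $N$ is large enough.

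The argument is essentially routine, and there is no serious obstacle: the conceptual content is just that the density of coprime pairs is $6/\pi^2$, and $4\cdot\tfrac{6}{\pi^2}=\tfrac{24}{\pi^2}>2$ leaves a comfortable margin over the required constant. The only point needing a little care is the control of the \emph{signed} error term in the Möbius sum, which I handle by the crude absolute bound above rather than by exploiting any cancellation, together with the elementary tail estimate $\sum_{d>N}1/d^2=O(1/N)$ for the series $\sum\mu(d)/d^2$.
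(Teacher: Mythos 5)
Your proof is correct, but it takes a different (if closely related) route from the paper's. The paper also reduces to the quadrant count $Q$ with $1\le x\le N$, $1\le y\le N^2$, but then stops at the very first step of the sieve: it bounds the non-coprime pairs by a union bound over primes, $Q\ge N^3-\sum_p N^3/p^2$, and invokes the numerical value $\sum_p p^{-2}=0.4522\ldots$ of the prime zeta function to conclude $Q>\tfrac12 N^3$. You instead carry out the full M\"obius inversion, obtaining the exact density $6/\pi^2\approx 0.6079$ with an $O(N^2\log N)$ error, and use $24/\pi^2>2$. Your version is sharper and more self-contained in the sense that it needs only the classical identity $\sum\mu(d)/d^2=6/\pi^2$ rather than a decimal expansion looked up in the OEIS, at the cost of having to control the signed error term (which your crude absolute bound handles adequately, since the main term wins by a fixed proportion). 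The paper's version is a one-line inequality requiring no asymptotics at all, which is why it is phrased as it is; both arguments clear the threshold $\tfrac12$ comfortably, the paper with margin $0.5477-0.5$ and yours with margin $0.6079-0.5$. One small point in your favour: you explicitly justify the factor $4$ via the sign involutions and note that axis points only help, whereas the paper asserts the $4Q$ bound as obvious.
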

\begin{proof}
Obviously the number of these pairs is at least $4Q$, where $Q$ denotes the number of pairs with $1 \leq x \leq N, 1 \leq y \leq N^2, (x,y)=1$. Using that $\sum_p p^{-2} = 0.4522472...$ (see e.g. \cite {oeis} A085548) we have
$$Q \geq N^3 - \sum_p \frac {N^3}{p^2}  >   \frac {N^3}{2}.$$
\end{proof}

\begin{lemma} \label{lem54}
For sufficiently large $N$ there are at least $\frac 12 N^3$ pairs of integers $(x,y)$ with
$$|x| \leq N, ~|y| \leq N^2,~ F_n(x,y) \neq 0,~ (x,y)=1,~ h(F_n(x,y)) \leq |F_n(x,y)|^{96 \theta n}.$$
\end{lemma}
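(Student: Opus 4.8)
The plan is to run the counting argument of Erd\H{o}s and Mahler, assembling the three preceding lemmas. I would call a pair $(x,y)$ with $|x|\leq N$, $|y|\leq N^2$, $(x,y)=1$ and $F_n(x,y)\neq 0$ \emph{bad} if $h(F_n(x,y)) > |F_n(x,y)|^{96\theta n}$, and \emph{good} otherwise. The aim is to show that the good pairs number at least $\frac12 N^3$, since every good pair automatically meets all the conditions in the statement.

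First I would isolate the coprime pairs on which $F_n$ is not too small. By Lemma \ref{lem53} there are at least $2N^3$ coprime pairs in the box $|x|\leq N$, $|y|\leq N^2$, while by Lemma \ref{lem52} at most $N^3$ pairs in the box satisfy $|F_n(x,y)|\leq \sqrt{N}$; this second bound also disposes of the pairs with $F_n(x,y)=0$. Removing the latter leaves at least $N^3$ coprime pairs with $|F_n(x,y)| > \sqrt{N}$, and in particular $F_n(x,y)\neq 0$ on them.

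Next I would bound the bad pairs among these using the global estimate of Lemma \ref{lem51}. For any bad pair we have $|F_n(x,y)| > \sqrt{N}$, whence $h(F_n(x,y)) > |F_n(x,y)|^{96\theta n} > N^{48\theta n}$. Writing $B$ for the number of bad pairs, and using that $h\geq 1$ always (an empty product equals $1$) so that every bad pair contributes a factor $\geq 1$ to the product $H(N)$ counted in Lemma \ref{lem51}, I obtain
$$N^{48\theta n B} < \prod_{\text{bad}} h(F_n(x,y)) \leq H(N) \leq N^{24\theta n N^3},$$
which forces $48\theta n B < 24\theta n N^3$, i.e. $B < \frac12 N^3$. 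Subtracting from the at least $N^3$ coprime pairs with $|F_n(x,y)|>\sqrt{N}$, the number of good pairs is at least $N^3 - \frac12 N^3 = \frac12 N^3$, as required.

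Since the clever ingredients (the truncated smooth-part function $h$ and the product bound $H(N)\leq N^{24\theta n N^3}$) are already supplied, the only delicate point is the calibration of constants. The exponent $96\theta n$ in the statement is chosen precisely so that halving it (coming from the restriction $|F_n|>N^{1/2}$) yields $48\theta n$, exactly twice the constant $24\theta n$ of Lemma \ref{lem51}, producing the clean factor $\frac12$. Accordingly I would verify that the three invocations of "sufficiently large $N$" can be made uniform in $N$ (depending only on $n$), so that Lemmas \ref{lem51}, \ref{lem52} and \ref{lem53} all apply simultaneously for one common threshold.
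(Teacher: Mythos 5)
Your proposal is correct and follows essentially the same route as the paper: combine Lemmas \ref{lem52} and \ref{lem53} to get at least $N^3$ coprime pairs with $|F_n(x,y)|\geq\sqrt N$, then note that each bad pair forces $h(F_n(x,y))>N^{48\theta n}$, so more than $\tfrac12 N^3$ bad pairs would make $H(N)>N^{24\theta n N^3}$, contradicting Lemma \ref{lem51}. The only cosmetic difference is that you count the bad pairs directly while the paper argues by contradiction; the calibration of the exponent $96\theta n$ is exactly as you describe.
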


\begin{proof}
 By Lemmas \ref{lem52} and \ref{lem53} there are at least $N^3$ pairs $(x,y)$ with
 
 $$|x| \leq N, ~|y|\ \leq N^2, ~(x,y)=1,~|F_n(x,y)| \geq \sqrt{N}.$$
 Hence, if Lemma \ref{lem54} were false, there would be more than $\frac 12 N^3$ such coprime pairs $x,y$ with
 $$h\left(F_n(x,y)\right) > |F_n(x,y)|^{96 \theta n} \geq N^{48 \theta n}$$
 and therefore, for $N$ sufficiently large,
 $$H(N)> N^{48 \theta n \cdot \frac12 N^3} = N^{24 \theta n N^3},$$
 in contradiction to Lemma \ref{lem51}.
\end{proof}

\begin{lemma} \label{lem55}
For all $x$ and $y$,
$$ |F_n(x,y)| \leq F^*_{n} \cdot (\max(x^2,|y|))^{\frac 12 (n-1)} < (3 \max(x^2,|y|))^{\frac 12 (n-1)} ,$$
where $F^*_n$ is the $n$-th Fibonacci number.
\end{lemma}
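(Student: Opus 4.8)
The plan is to expand $F_n(x,y)$ into monomials, bound each monomial by a single common power of $M:=\max(x^2,|y|)$, and then recognize the resulting sum of absolute values of the coefficients as a Fibonacci number.

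First I would record the sign pattern of the coefficients. I claim that
$$F_n(x,y)=\sum_{h=0}^{\lfloor (n-1)/2\rfloor}(-1)^h c_{n,h}\,x^{n-1-2h}y^h,\qquad c_{n,h}\ge 0,$$
which follows by induction on $n$ from the recurrence $F_n=xF_{n-1}-yF_{n-2}$: multiplying the expansion of $F_{n-1}$ by $x$ raises the power of $x$ by one and leaves signs untouched, while multiplying the expansion of $F_{n-2}$ by $-y$ introduces a factor $-1$ together with one extra $y$. Collecting terms, the coefficient of $x^{n-1-2h}y^h$ in $F_n$ is $(-1)^h\bigl(c_{n-1,h}+c_{n-2,h-1}\bigr)$, so that $c_{n,h}=c_{n-1,h}+c_{n-2,h-1}\ge 0$, with the base cases read off from $F_0,F_1$.

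Next I apply the triangle inequality together with the estimates $|x|^{n-1-2h}=(x^2)^{(n-1-2h)/2}\le M^{(n-1-2h)/2}$ and $|y|^h\le M^h$, valid because $x^2\le M$ and $|y|\le M$. Since $\tfrac12(n-1-2h)+h=\tfrac12(n-1)$, each monomial is at most $c_{n,h}\,M^{(n-1)/2}$, and factoring out this common power gives
$$|F_n(x,y)|\le M^{(n-1)/2}\sum_{h=0}^{\lfloor (n-1)/2\rfloor}c_{n,h}.$$
The remaining sum is exactly $F_n(1,-1)$: substituting $x=1,\ y=-1$ into the expansion turns the coefficient of $x^{n-1-2h}y^h$ into $(-1)^h c_{n,h}\cdot(-1)^h=c_{n,h}$. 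As $F_n(1,-1)$ obeys $F_n(1,-1)=F_{n-1}(1,-1)+F_{n-2}(1,-1)$ with $F_0(1,-1)=0$ and $F_1(1,-1)=1$, it equals the $n$-th Fibonacci number $F^*_n$, yielding the first inequality $|F_n(x,y)|\le F^*_n\,M^{(n-1)/2}$.

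Finally, for the strict bound it suffices to verify $F^*_n<3^{(n-1)/2}=(\sqrt3)^{\,n-1}$. Here I would use the standard estimate $F^*_n\le\bigl(\tfrac{1+\sqrt5}{2}\bigr)^{n-1}$, immediate by induction from $\bigl(\tfrac{1+\sqrt5}{2}\bigr)^2=\tfrac{1+\sqrt5}{2}+1$, together with $\tfrac{1+\sqrt5}{2}<\sqrt3$; since $n\ge 2$ throughout this section, raising to the power $n-1$ keeps the inequality strict. The only step requiring genuine care is the bookkeeping in the induction that establishes the alternating-sign form and the clean cancellation of exponents producing the single factor $M^{(n-1)/2}$; the two numerical inequalities and the Fibonacci recursion are then routine.
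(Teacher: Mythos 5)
Your proposal is correct and follows essentially the same route as the paper, whose entire proof is the one-line observation that by induction the sum of the absolute values of the coefficients of $F_n$ equals $F^*_n$; you simply spell out the sign-pattern induction, the exponent bookkeeping giving the common factor $(\max(x^2,|y|))^{(n-1)/2}$, and the numerical bound $F^*_n\le\bigl(\tfrac{1+\sqrt5}{2}\bigr)^{n-1}<(\sqrt3)^{n-1}$, all of which the paper leaves implicit.
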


\begin{proof}
By induction on $n$ it follows that $\sum_{h=0}^{n-1} |a_h| = F^*_n.$
\end{proof}

\begin{lemma} \label{lem56}
For sufficiently large $N$ there are at least $\frac 12 N^3$ pairs of integers $x,y$ with
\begin{equation} \label{formone}
 |x| \leq N, ~|y| \leq N^2,~ (x,y)=1,~ F_n(x,y) \neq 0
 \end{equation}
such that $|F_n(x,y)| = k_1k_2$ where $k_1$ and $k_2$ are positive integers such that $k_1$ is divisible by at most $\gamma + \frac{n \log (2N)}{\theta \log N}$ different primes and 
$k_2 \leq |F_n(x,y)|^{96 \theta n}.$
\end{lemma}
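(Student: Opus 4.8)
The plan is to reuse the set of at least $\frac12 N^3$ pairs $(x,y)$ furnished by Lemma \ref{lem54}: for sufficiently large $N$ these satisfy \eqref{formone} together with $h(F_n(x,y)) \le |F_n(x,y)|^{96\theta n}$. Fix such a pair, write $R = |F_n(x,y)| > 0$, and split $R = k_1 k_2$ by setting
$$k_2 = h(R), \qquad k_1 = R/h(R).$$
Both factors are positive integers, since by its definition $h(R)$ is a product of exact prime-power divisors of $R$. The required bound $k_2 = h(R) \le R^{96\theta n} = |F_n(x,y)|^{96\theta n}$ is then precisely the property guaranteed by Lemma \ref{lem54}, so it only remains to count the distinct prime factors of $k_1$.

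Reading off the definition of $h(R)$, a prime $p$ with $p^a \| R$ contributes its full power $p^a$ to $k_1$ in exactly three situations: when $p \le \gamma$; when $\gamma < p \le N^\theta$ but $p^a > N^\theta$; and when $p > N^\theta$. The primes in the first group number at most $\pi(\gamma) \le \gamma$. In the two remaining groups every surviving prime power satisfies $p^a > N^\theta$, so if $j$ denotes their number, then their product, which divides $R$, exceeds $N^{\theta j}$.

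To bound $R$ I would invoke Lemma \ref{lem55}: since $|x| \le N$ and $|y| \le N^2$ give $\max(x^2,|y|) \le N^2$, we obtain $R < (3N^2)^{(n-1)/2} = 3^{(n-1)/2} N^{n-1} < (2N)^n$, the last inequality holding because $\sqrt3 < 2$. Combining $N^{\theta j} < R < (2N)^n$ and taking logarithms yields $j < \frac{n\log(2N)}{\theta\log N}$, so together with the at most $\gamma$ small primes the factor $k_1$ has at most $\gamma + \frac{n\log(2N)}{\theta\log N}$ distinct prime divisors. As the underlying pairs number at least $\frac12 N^3$, the lemma follows. I anticipate no real difficulty beyond careful bookkeeping with the definition of $h(R)$; the only delicate point is the explicit estimate $R < (2N)^n$ coming out of Lemma \ref{lem55}, since it is exactly this bound that produces the stated constant $\frac{n\log(2N)}{\theta\log N}$.
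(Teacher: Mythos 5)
Your proposal is correct and follows essentially the same route as the paper: split $|F_n(x,y)|$ as $k_2=h(F_n(x,y))$ and $k_1=|F_n(x,y)|/k_2$, invoke Lemma \ref{lem54} for the count of pairs and the bound on $k_2$, and bound the number of primes dividing $k_1$ via $|F_n(x,y)|<(2N)^n$ from Lemma \ref{lem55}. Your bookkeeping of the three groups of primes absorbed into $k_1$ (those $\le\gamma$, those in $(\gamma,N^\theta]$ with $p^a>N^\theta$, and those $>N^\theta$) is in fact slightly more explicit than the paper's, which mentions only the first and third groups, but the counting is identical.
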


\begin{proof}
We apply Lemma \ref{lem54} with 
$$k_2 = h(F_n(x,y)),~~k_1 = \frac{|F_n(x,y)|}{k_2}.$$
Then $k_1$ and $k_2$ are positive integers, since $h(F_n(x,y))$ is a positive integer which divides $F_n(x,y)$. By Lemma \ref{lem54}, for at least $\frac 12 N^3$ pairs $(x,y)$ satisfying \eqref{formone},
$$k_2 \leq |F_n(x,y)|^{96 \theta n}.$$
The factor $k_1$ is divisible by prime numbers of the form $p$ with $p \leq \gamma$ or $p > N^{\theta}.$ But 
since $|F_n(x,y)| < F^*_nN^n< (2N)^n$ by Lemma \ref{lem55}, there are at most 
$$\gamma + \frac{n \log (2N)}{\theta \log N}$$ primes dividing $k_1$.
\end{proof}

It now suffices to prove that the equation $F_n(x,y) = k$ has a number of relatively prime integers solutions $(x,y)$ which is bounded by a constant depending only on $n$. Here we apply Lemma \ref{lem57} which is Theorem 3(i) of Lewis and Mahler \cite{lm}. Recall that $c_4, c_5, c_6, \dots$ are constants depending only on $n$. For an integer $x$ and a prime $p$ we write $|x|_p = p^{-r}$ if $p^r\mid x$, $p^{r+1} \nmid x$.

\begin{lemma} \label{lem57}
Let $G(x,y)$ be a binary form of degree $m \geq 3$ with integer coefficients and non-zero discriminant satisfying
$$G(1,0) \neq 0 \ \ and \ \ G(0,1) \neq 0.$$
Let $a$ be the canonical height of $G(x,y)$; let
$$ \rho_m = \min_{h=1, 2, \dots, m} \left( \frac{m}{h+1} + h\right),\ \  \sigma_m = \rho_m + \frac 1m;$$
and let $p_1,p_2, \dots, p_t$ be any finite number of primes.
There are not more than 
\begin{equation} \label{embound}
2^{ \frac{\sigma_m + 2}{ \sigma_m - 2}} (2m^2a)^{\frac{4m}{\sigma_m - 2}} +e(t+1)\Bigl \lfloor \frac{\log(48a^2m^8)}{ \log(a-1)} + 2\Bigr \rfloor (em(3\rho_m m+4))^{t+1}
\end{equation}
pairs of integers $x,y$ such that
$$ x \neq 0, \ \ y>0, \ \ (x,y)=1, \ \ 0 < |G(x,y)| \prod_{\tau = 1}^t |G(x,y)|_{p_{\tau}} \leq (\max(|x|,|y|))^{c_4},$$
where $c_4 = m-\rho_m-\frac{4}{3m}$ for all $m \geq 3$.
\end{lemma}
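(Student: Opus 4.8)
The plan is to prove this along the lines of the Thue--Siegel--Mahler method, since the inequality in the statement is exactly a Thue--Mahler inequality once one factors the form. First I would write $G(x,y) = G(1,0)\prod_{i=1}^m (x - \alpha_i y)$ over $\overline{\Q}$, where the $\alpha_i$ are the roots of $G(t,1)$; these are distinct because the discriminant is non-zero. For a solution $(x,y)$ of the displayed inequality with $H := \max(|x|,|y|)$ large, the archimedean factor $|G(x,y)|$ can be small only if $x/y$ is very close to one root $\alpha_i$, and for each prime $p_\tau$ the factor $|G(x,y)|_{p_\tau}$ can be small only if $x/y$ is $p_\tau$-adically close to one of the roots in an algebraic closure of $\Q_{p_\tau}$. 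Thus every solution carries a \emph{type}: the index of the archimedean root it approximates together with the indices of the $p_\tau$-adic roots, giving at most $m^{t+1}$ types and accounting for the $(t+1)$-fold exponential shape of the second term in \eqref{embound}.

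Next I would fix a type and derive the fundamental approximation inequality: combining the archimedean and $p$-adic estimates with the product hypothesis shows that the single rational $x/y$ simultaneously approximates the chosen roots at all the relevant places to a total order governed by $c_4 = m - \rho_m - \tfrac{4}{3m}$. The point of $\rho_m = \min_h (m/(h+1)+h)$ is that it is precisely the approximation exponent attainable by Siegel's construction of auxiliary polynomials (Padé/hypergeometric approximations to the functions $(1-z)^{e}$), optimized over the integer parameter $h$. With $c_4$ as above the total approximation order strictly exceeds $\rho_m$, which is what makes the solution set finite and, more importantly, quantitatively sparse; the auxiliary quantity $\sigma_m = \rho_m + \tfrac1m$ records the slight loss incurred in passing from the optimal exponent to an effective count.

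The heart of the argument is a gap principle obtained from two solutions of the same type. Given two solutions, I would feed their coordinates into Siegel's auxiliary polynomial simultaneously at the archimedean place and at the finite places $p_1,\dots,p_t$, producing a non-zero algebraic integer whose size is bounded below by a product formula and above by the smallness coming from the approximation inequality; the resulting contradiction forces the heights of successive solutions of a given type to grow geometrically, at a rate depending on $\rho_m$ and $m$. Counting dyadic height ranges then bounds the number of large solutions per type by a quantity of the form $(em(3\rho_m m + 4))^{t+1}$ times the logarithmic factor $\lfloor \log(48a^2m^8)/\log(a-1) + 2\rfloor$ coming from the initial range, while the finitely many small solutions, those with $H$ below an explicit threshold depending on the canonical height $a$, are counted directly and produce the first term $2^{(\sigma_m+2)/(\sigma_m-2)}(2m^2 a)^{4m/(\sigma_m-2)}$. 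Summing over the at most $m^{t+1}$ types yields \eqref{embound}. The main obstacle is making Siegel's construction effective and \emph{simultaneous} across the archimedean place and all $t$ finite places at once while keeping the heights of the auxiliary polynomials under explicit control; this is exactly where the interplay between $\rho_m$, $\sigma_m$, and the exponent $t+1$ enters, and it is the technically heaviest part of the proof.
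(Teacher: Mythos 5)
You should first be aware that the paper does not prove this statement at all: it is imported verbatim as Theorem 3(i) of Lewis and Mahler \cite{lm}, so there is no internal proof to compare against. Your outline does correctly identify the method Lewis and Mahler actually use --- factoring the form, noting that smallness of $|G(x,y)|$ at each place forces $x/y$ to approximate one root at that place, assigning each solution a type, and combining a gap principle from Siegel-type auxiliary polynomials with a count over height ranges. As a description of the architecture it is accurate, and the roles you assign to $\rho_m$ (the optimized Siegel exponent) and to the type decomposition are the right ones.

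As a proof, however, there is a genuine gap: essentially all of the quantitative content is asserted rather than derived, and you say so yourself when you defer ``the technically heaviest part.'' Concretely: (1) the exponent $c_4=m-\rho_m-\tfrac{4}{3m}$ and the margin recorded by $\sigma_m=\rho_m+\tfrac1m$ are never extracted from an actual inequality --- you would need to run the Thue--Siegel construction with explicit height bounds on the auxiliary polynomials to see where the $\tfrac{4}{3m}$ and $\tfrac1m$ losses come from; (2) the base $em(3\rho_m m+4)$ in the second term of \eqref{embound} cannot come from the $m^{t+1}$ count of types alone --- the extra factor arises because one must also discretize, over the $t+1$ places, the way the total approximation order is distributed among them (an exponent-vector covering argument), a step your sketch omits entirely; (3) the first term $2^{(\sigma_m+2)/(\sigma_m-2)}(2m^2a)^{4m/(\sigma_m-2)}$ is not obtained by ``counting small solutions directly'' --- a direct lattice count below a height threshold would produce a bound quadratic in the threshold, not this shape, which instead reflects the gap-principle count of solutions in the archimedean-dominated regime. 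Since deriving these constants is the entire substance of the Lewis--Mahler theorem, the proposal as it stands is a roadmap rather than a proof; for the purposes of this paper the honest course is to do what the authors do and cite \cite{lm}.
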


On  using that $\rho_m  \leq 2 \sqrt{m}$, we obtain
$$c_4 = m-\rho_m - \frac{4}{3m}= \frac {1} {18},~ \frac {2} {3},~  \frac {37} {30},~ \frac{16}{9},~ \frac{52}{21},~ \frac {19}{6},~ \frac{104}{27},~ \geq \frac m3  $$
for $m=3,4, 5, 6, 7, 8, 9, \geq 10$ respectively.
Recall that $n$ is odd, $n \geq 7$, hence $F_n(\sqrt{x},y) \equiv G_m(x,y)$ and $n-1=2m$. Thus $m \geq 3$ and \eqref{embound} can be written as $c_5 + c_6(t+1)c_7^{t+1}$,
provided that $a$ can be bounded by a constant depending only on $n$.
By Lemma \ref{lem55} we have $|a|<2^n$ for our polynomial $G(x,y)$.

\begin{lemma} \label{lem58} 
If $k$ is an integer, $k > (F_{2m+1}^*)^{5}$, and it can be written in the form $k=k_1k_2$ with $k_1$ and $k_2$ positive integers such that $k_1$ is divisible by at most $t$ different primes,  $k_2$ is not divisible by any of these primes and $k_2 \leq k^{\frac {2c_4}{5m}}$, then the equation $G_m(x,y) = k$ has not more than $c_5 + c_6(t+1)c_7^{t+1}$ different solutions in relatively prime integers $x,y$.
\end{lemma}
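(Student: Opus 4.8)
The plan is to obtain Lemma~\ref{lem58} as a more or less immediate consequence of the Lewis--Mahler bound in Lemma~\ref{lem57}, once the divisibility structure of $k$ is translated into the $p$-adic condition occurring there and the hypotheses of Lemma~\ref{lem57} are checked for $G_m$. First I would let $p_1,\dots,p_s$ (with $s\le t$) be the distinct primes dividing $k_1$ and apply Lemma~\ref{lem57} with exactly these primes; since the bound \eqref{embound} is increasing in the number of primes and $c_7\ge 1$, the resulting estimate is at most $c_5+c_6(t+1)c_7^{t+1}$. The point is that $k=k_1k_2$ with $k_2$ coprime to every $p_\tau$ and $k_1$ having no prime factor outside $\{p_1,\dots,p_s\}$, so $v_{p_\tau}(k)=v_{p_\tau}(k_1)$ and $\prod_\tau p_\tau^{v_{p_\tau}(k_1)}=k_1$. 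Hence for every solution of $G_m(x,y)=k$,
$$
|G_m(x,y)|\prod_{\tau=1}^{s}|G_m(x,y)|_{p_\tau}=k\prod_{\tau=1}^{s}|k|_{p_\tau}=\frac{k}{k_1}=k_2,
$$
which is the positive integer that Lemma~\ref{lem57} constrains.

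The quantitative heart is then to verify the upper bound $k_2\le(\max(|x|,|y|))^{c_4}$ demanded by Lemma~\ref{lem57}. Since for $n$ odd $G_m(x,y)=F_n(\sqrt{x},y)=\sum_h a_h x^{m-h}y^h$ has the same coefficients $a_h$ as $F_n$, the proof of Lemma~\ref{lem55} gives $\sum_h|a_h|=F^*_n$, whence $k=|G_m(x,y)|\le F^*_n(\max(|x|,|y|))^m$ and therefore $\max(|x|,|y|)\ge (k/F^*_n)^{1/m}$. Combining this with the hypothesis $k_2\le k^{2c_4/(5m)}$, it suffices to check $k^{2c_4/(5m)}\le (k/F^*_n)^{c_4/m}$, that is $k^{3/5}\ge F^*_n$, i.e.\ $k\ge (F^*_n)^{5/3}$; this holds since $k>(F^*_{2m+1})^5=(F^*_n)^5$. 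Thus $k_2\le(\max(|x|,|y|))^{c_4}$, exactly the inequality in Lemma~\ref{lem57}.

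With these two facts I would invoke Lemma~\ref{lem57} for $G_m$. Its hypotheses are met: $G_m$ has degree $m\ge 3$ and non-zero discriminant (its roots $4\cos^2(k\pi/n)$, $k=1,\dots,m$, are distinct by the description of the zeros of $F_n$, and are positive, or directly by \cite{fhr}), while $G_m(1,0)=a_0=1\neq 0$ and $G_m(0,1)=a_m\neq 0$ because $0$ is not a root of $G_m(x,1)$; moreover its canonical height satisfies $a<2^n$ by Lemma~\ref{lem55}, so \eqref{embound} takes the shape $c_5+c_6(t+1)c_7^{t+1}$ with constants depending only on $n$. This counts the coprime solutions with $x\neq 0$, $y>0$. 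To reach all coprime integer solutions I would run the identical argument on the form $\tilde G_m(x,y):=G_m(x,-y)$ (same coefficient sum, non-zero discriminant, and $\tilde G_m(1,0),\tilde G_m(0,1)\neq 0$) to cover $y<0$, and note that $x=0$ or $y=0$ together with coprimality forces $|G_m(x,y)|$ to equal $|a_m|$ or $1$, impossible for $k>(F^*_n)^5$. Adding the two contributions and absorbing the factor $2$ into the constants gives at most $c_5+c_6(t+1)c_7^{t+1}$ solutions in total.

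The main obstacle is conceptually the exponent matching of the previous paragraph: the whole estimate hinges on the hypothesis $k_2\le k^{2c_4/(5m)}$ being strong enough, in tandem with the elementary coefficient bound $\max(|x|,|y|)\ge (k/F^*_n)^{1/m}$, to push the $p$-adically reduced value $k_2$ below $(\max(|x|,|y|))^{c_4}$. The threshold $k>(F^*_n)^5$ is chosen precisely so that the factor $F^*_n$ lost in that coefficient bound is harmless (one only needs $k\ge (F^*_n)^{5/3}$). Everything else is bookkeeping: the verification of the Lewis--Mahler hypotheses, already prepared in the text via \cite{fhr} and the height bound $a<2^n$ from Lemma~\ref{lem55}, and the routine reduction of arbitrary coprime solutions to the case $x\neq 0$, $y>0$.
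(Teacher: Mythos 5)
Your proposal is correct and follows essentially the same route as the paper: both reduce the statement to Lemma \ref{lem57} by translating $k=k_1k_2$ into the $p$-adic product condition and then using the coefficient bound of Lemma \ref{lem55} together with $k>(F^*_{2m+1})^5$ to verify $k_2\le(\max(|x|,|y|))^{c_4}$. Your version is slightly more explicit about the $p$-adic bookkeeping, the hypotheses of the Lewis--Mahler lemma for $G_m$, and the solutions with $y\le 0$ or $xy=0$ (and uses the sharper degree-$m$ height bound where the paper uses exponent $2m$), but these are presentational differences, not a different argument.
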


\begin{proof} 
Let $k$ be an integer for which $G_m(x,y)=k$ has at least one solution $(x_0,y_0)$. Then, by Lemma \ref{lem55},
\begin{equation} \label{k55}
 k  \leq F^*_{2m+1} \cdot (\max(|x|,|y|))^{2m}.
 \end{equation}
The integer $k=k_1k_2$ is the product of the integer $k_1$ with at most $t$ distinct prime divisors and the integer $k_2$ which is coprime to the product of these primes. By \eqref{k55},
$$k_2 \leq k^{\frac {2c_4}{5m}}  \leq (F^*_{2m+1})^{\frac {2c_4}{5m}}  (\max(|x|,|y|))^{\frac {4c_4} {5}}.$$
Thus, since $k > (F_{2m+1}^*)^{5}$,  we get, by \eqref{k55},
$$\max(|x|,|y|) \geq \left( \frac{k}{F^*_{2m+1}} \right)^{\frac {1}{2m}} > (F^*_{2m+1})^{\frac {2}{m}},$$
hence
$$k_2 <  (\max(|x|,|y|))^{\frac{c_4}{5}+ \frac{4c_4}{5}}=  (\max(|x|,|y|))^{c_4}.$$
The statement of Lemma \ref{lem58} follows from Lemma \ref{lem57}.
\end{proof}

\begin{proof}[Proof of Theorem \ref{thm23}.] 
Let $R = \left(\frac{N}{F^*_{n}}\right)^{\frac{1}{n-1}}$. Then $|F_n(x,y)| \leq N$ for $|x| \leq R$ and $|y| \leq R^2$ in view of Lemma \ref{lem55}. It follows from Lemma \ref{lem56} that there are at least
\begin{equation}  \label{NFn}
\frac 12 R^3 = \frac 12 \left(\frac{N}{F^*_{n}}\right)^{\frac {3}{n-1}}
\end{equation}
different pairs of relatively prime integers $x,y$ with $|x| \leq R, |y| \leq R^2$ for which 
$|F_n(x,y)| = k \neq 0$
and $k$ is a product of two positive integers $k=k_1k_2$, such that $k_1$ is divisible by at most $\gamma + \frac{n \log(2R)}{\theta \log R}$ different primes, whereas $k_2 \leq |F_n(x,y)|^{96 \theta n}$. We choose $$\theta = \frac{c_4}{240 n^2} ~( <1)$$
where $c_4$ is the constant in Lemma \ref{lem57} for $m = \frac{n-1}{2}$.
Since $n\geq 7$, $G_m(x,y)$ is a binary form of degree $m= \frac 12 (n-1) \geq 3$. 
Thus there are at least \eqref{NFn} different pairs of relatively prime integers $x,y$ with $|x| \leq R, |y| \leq R^2$ for which 
$|G_m(x^2,y)| =|F_n(x,y)| = k \neq 0$
and  $k=k_1k_2$, such that $k_1$ is divisible by at most $$\gamma + \frac{n \log(2R)}{\theta \log R} < c_8n^3$$ different primes for some positive integer $c_8$, while 
$$k_2 \leq |G_m(x^2,y)|^{96 \theta n} = |G_m(x^2,y)|^{\frac{2c_4}{5n}}.$$ Here we have used that $N$, hence $R$, is sufficiently large.
We apply Lemma \ref{lem58} with $t=c_8 n^3$ and $k_2 \leq k^{\frac{2c_4}{5n}}$.  We conclude that the number of different relatively prime solutions of $G_m(x,y) = k$ with $k > (F^*_{n})^5$  is not larger than $$ c_9:=c_5 + c_6(c_8 n^3)c_7^{c_8 n^3}.$$ So the number of different coprime solutions of $F_n(x,y) = G_m(x^2,y)=k$ for a fixed $k$ is not larger than $c_{9}.$ The number of such solutions of the equation $F_n(x,y) =k$ with $k \leq (F_{n}^*)^5$ is bounded by a constant $c_{10}$. Put $c_{11} = c_9+c_{10}$. Thus, for sufficiently large $N$, by \eqref{NFn}, there must be at least
$$ \frac {1}{2c_{11}} \left(\frac{N}{F^*_{n}}\right)^{\frac {3}{n-1}} > c_{12} N^{\frac{3}{n-1}}+9$$
different positive integers $k \leq N$ for which $|F_n(x,y)| = k$ has at least one solution in integers $x,y$ with $(x,y)=1.$ Observe that in view of Lemma \ref{lemdeg} for these values of $x,y$, the Lucas sequence corresponding to $(A,B)=(x,y)$ is degenerate only for the nine values
$$
(x,y)=(0,0),\ (0,\pm 1),\ (\pm 1,0),\ (\pm 1,\pm 1).
$$
This has been proved for $N > N_1(n)$. By adjusting the constant $c_{12}$ for the smaller values of $N$ and calling the new constant $c_1$, the result follows.
\end{proof}

\noindent {\bf Remark 5.} Obviously the same lower bound applies to $\ml_{\geq n}$.

\begin{proof}[Proof of Theorem \ref{thm24} - case $n$ even] Let $n$ be even. As we already mentioned, in this case $U_n=AG_m(A^2,B)$, where $A$ divides $U_n$, and $G_m(x,y)$ is a binary form of degree $m=\frac{n-2}{2}$. By Lemma \ref{lem55} for all the pairs $(A,B)$ with
$$
1\leq A\leq \frac 12 N^{\frac{1}{n-1}},\ \ \ 1\leq B\leq  \frac 14 N^{\frac{2}{n-1}}
$$
we have $|G_m(A,B)|\leq N$. Observe that the number of such pairs $(A,B)$ for which the corresponding Lucas sequence is non-degenerate, by Lemma \ref{lemdeg} is at least $\frac 18 N^{\frac{3}{n-1}}-2N^{\frac{1}{n-1}}$. Indeed, for $A$ fixed, there exists at most one such $B$ with $A^2-4B=0$, and at most three such $B$-s with \eqref{eqdeg}. We claim that for any $k$ with $1\leq k\leq N$ the equation
\begin{equation}
\label{hany}
F_n(x,y) = AG_m(x^2,y)=k
\end{equation}
can have at most $nN^{\frac{1.066}{\log\log N}}$ solutions among these pairs $(A,B)$. From this the theorem immediately follows. In the first place, observe that if $(A,B)$ is any solution of \eqref{hany}, then $A\mid k$. By Theor\`eme 1 of \cite{nr} the number of divisors of $k$, and so the number of choices of $A$, is at most
\begin{equation}
\label{dn}
\tau(k)\leq k^{\frac{1.5379\log 2}{\log\log k}}\leq N^{\frac{1.066}{\log\log N}},
\end{equation}
where $\tau(k)$ denotes the number of positive divisors of $k$.
Furthermore, for any $x=A$, \eqref{hany} can have at most $n$ solutions in $y$, and our claim follows.

Obviously the same lower bound applies to $\ml_{\geq n}$. Hence the statement is proved for even $n$.
\end{proof}

Theorem \ref{thm24} is a consequence of Theorem \ref{thm23} for $n\geq 7$ odd, while it has been proved above for $n\geq 6$ even. In order to deal with the remaining case $n=5$ we apply the following lemma.

\begin{lemma}
\label{lem5}
There exists an absolute constant $T_0$, such that if $T$ is an integer with $T>T_0$ and $t$ is a non-zero integer with $|t|\leq 4T$, then the equation
\begin{equation}
\label{pell5}
x^2-5y^2=t
\end{equation}
has at most $T^{\frac{4}{\log\log T}}$ solutions in positive integers $x,y$ with $|y|\leq \frac{1}{2}\sqrt{T}$.
\end{lemma}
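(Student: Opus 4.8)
The plan is to prove Lemma~\ref{lem5} by counting solutions to the Pell-type equation $x^2-5y^2=t$ through the standard factorization in the ring of integers of $\Q(\sqrt 5)$, combined with a divisor-counting bound of the same shape as \eqref{dn}. The equation $x^2-5y^2=t$ factors as $(x-y\sqrt 5)(x+y\sqrt 5)=t$, so each solution gives rise to an element $x+y\sqrt 5$ of norm $t$ in $\Z[\sqrt 5]$ (or in the full ring of integers $\mathcal O=\Z[\frac{1+\sqrt5}{2}]$). The number of such elements of given norm $t$, \emph{up to multiplication by units}, is controlled by the number of ideals of norm $|t|$, which in turn is bounded by a divisor-type function of $|t|$. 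The key point is that among all associates, the constraint $1\le x$ and $|y|\le\frac12\sqrt T$ pins down a bounded-width window of the fundamental unit, so that at most a constant number of associates of each element survive the size restriction.

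First I would record the arithmetic of $\Q(\sqrt5)$: the fundamental unit is $\varepsilon=\frac{1+\sqrt5}{2}$ with $\varepsilon>1$, and $\mathcal O$ has class number $1$. For a given nonzero $t$, the number of integral ideals of norm $|t|$ is at most $\tau(|t|)$ (indeed it equals a multiplicative function dominated by the divisor function), so by the same Nicolas--Robin bound \eqref{dn} invoked earlier, the number of such ideals is at most $|t|^{c/\log\log|t|}$ for a suitable absolute constant $c$. Using $|t|\le 4T$ and $T>T_0$ large, this is at most $(4T)^{c/\log\log(4T)}\le T^{4/\log\log T}$ after absorbing the factor $4$ and adjusting $T_0$; here the choice of the exponent $4$ gives comfortable room. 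Each ideal of norm $|t|$ that is principal, say $(\pi)$, contributes the solutions coming from generators $\pm\varepsilon^{k}\pi$; the sign and the power $\varepsilon^k$ are the only freedom.

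Next I would show that the size restriction $1\le x$, $|y|\le\frac12\sqrt T$ forces $k$ into a bounded range. Writing $\xi=x+y\sqrt5$, the two conditions control $|\xi|$ and $|\bar\xi|=|t|/|\xi|$ from above and below: since $|x|,|y\sqrt5|\ll\sqrt T$ we get $|\xi|\ll\sqrt T$, and $|\xi|=|t|/|\bar\xi|\gg |t|/\sqrt T$. Passing to associates $\varepsilon^k\xi$ multiplies $|\xi|$ by $\varepsilon^k$, and only finitely many—indeed $O(1)$, since $\varepsilon>\frac32$—values of $k$ keep $\varepsilon^k\xi$ inside the admissible window $[\,c'|t|/\sqrt T,\ c''\sqrt T\,]$. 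Together with the sign ambiguity this shows each principal ideal of norm $|t|$ yields at most a bounded absolute number of admissible $(x,y)$. Multiplying the $O(1)$ associates-per-ideal bound by the ideal count $T^{4/\log\log T}$ (with the exponent chosen large enough to swallow the constant for $T>T_0$) gives the claimed bound.

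\emph{The main obstacle} I anticipate is the bookkeeping needed to translate the two-sided size constraints into a genuinely bounded range for $k$ uniformly in $t$, because $|t|$ itself varies up to $4T$: when $|t|$ is close to its maximum the window $[\,c'|t|/\sqrt T,\ c''\sqrt T\,]$ could in principle be empty or of awkward length, so I would need to verify that its logarithmic length is $O(1)$ in all regimes $1\le|t|\le 4T$, ensuring an absolute bound on the number of admissible powers of $\varepsilon$. A secondary care point is justifying the passage from solutions of the norm equation to \emph{ideals} (handling non-principal representations and the unit group correctly); since $\Q(\sqrt5)$ has class number $1$ and trivial torsion in the unit group apart from $\pm1$, this is clean, but it must be stated so that the divisor bound applies to ideals rather than to raw element counts.
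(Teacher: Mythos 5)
Your overall strategy coincides with the paper's: factor $x^2-5y^2=t$ in $\Q(\sqrt5)$, bound the number of classes of solutions by a divisor-type quantity via the Nicolas--Robin estimate \eqref{dn}, and then bound the number of associates within each class that meet the size restrictions. The paper implements the first part through LeVeque's description of the classes of solutions together with Gy\H{o}ry's bound $I\le 2^{\omega(t)}\tau_2(t)\le\tau(t)^3$, which is the same in substance as your ideal count (and your count is, if anything, slightly sharper).

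There is, however, a genuine error in your second step. You claim that the constraints $x\ge 1$, $|y|\le\frac12\sqrt T$ confine $\xi=x+y\sqrt5$ to a window $\bigl[c'|t|/\sqrt T,\;c''\sqrt T\bigr]$ of logarithmic length $O(1)$, so that only $O(1)$ powers of the unit survive per class. The ratio of the endpoints of that window is $(c''/c')\,T/|t|$, which is of order $T$ when $|t|$ is small (say $|t|=1$); you anticipated trouble at the large-$|t|$ end, but the failure is at the small-$|t|$ end. Consequently the number of admissible unit powers per class is of order $\log T$, not $O(1)$, and no amount of bookkeeping will make it absolute uniformly in $t$. This is precisely what the paper establishes: writing $y=a_i\alpha^n-b_i\beta^n$ with $\alpha=9+4\sqrt5$, it shows $|a_i|>\frac{1}{1414\,T}$ and $|b_i|<1.1\sqrt T$, whence $y>\frac12\sqrt T$ once $n>\log T+3$, i.e.\ at most $O(\log T)$ exponents per class. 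Fortunately the target bound $T^{4/\log\log T}$ has ample room to absorb an extra factor $\log T$ on top of $\tau(t)^{O(1)}\le (4T)^{O(1)/\log\log(4T)}$, so your argument is repaired simply by replacing ``$O(1)$ associates'' with ``$O(\log T)$ associates'' and redoing that last multiplication. A secondary point to tidy up: since $x+y\sqrt5$ lies in the non-maximal order $\Z[\sqrt5]$ while $\varepsilon=\frac{1+\sqrt5}{2}$ does not, you should run the associate count with the automorphs of the form $x^2-5y^2$ (powers of $9+4\sqrt5$, as the paper does) rather than with $\varepsilon$ itself; this affects only constants, but as written the multiplication by $\varepsilon^k$ need not preserve integrality of $(x,y)$.
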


\begin{proof} It is standard (see e.g. Lemma 3.2 and its proof in \cite{hs}) that all positive solutions of \eqref{pell5} come from identities
$$
x+\sqrt{5}y=(u_i+v_i\sqrt{5})(9+4\sqrt{5})^n\ \ \ (n\geq 0,\ i=1,\dots,I),
$$
where $u_i,v_i\in\mathbb{Z}$ $(i=1,\dots,I)$ for some $I$. Here $9+4\sqrt{5}$ is the fundamental solution of \eqref{pell5} with $t=1$. By Theorem 8-9 of \cite{lvq} (see pp. 147-148) we can take
\begin{equation}
\label{u}
0<u_i<\sqrt{\frac{(22+9\sqrt{5})|t|}{8}}<4.6\sqrt{T}\ \ \ (i=1,\dots,I),
\end{equation}
which implies
\begin{equation}
\label{v}
|v_i|<2.3\sqrt{T}\ \ \ (i=1,\dots,I).
\end{equation}
Further, by Lemma 5 of Gy\H{o}ry \cite{gy} we know that
$$
I\leq 2^{\omega(t)}\tau_2(t),
$$
where $\omega(t)$ is the number of prime divisors of $t$, while $\tau_2(t)$ is the number of ways the principal ideal $(t)$ can be factorized into the product of two ideals in $\mathbb{Q}(\sqrt{5})$. As every prime can split into the product of at most two prime ideals in $\mathbb{Q}(\sqrt{5})$, we have $\tau_2(t)\leq(\tau(t))^2$. Thus, in view of \eqref{dn} we have
\begin{equation}
\label{i}
I\leq \tau(t)^3\leq |t|^{\frac{3.2}{\log\log |t|}}\leq (4T)^{\frac{3.2}{\log\log (4T)}}.
\end{equation}
Using Lemma 3.2 from \cite{hs} and its proof, we get that $y$ belongs to a recurrence sequence $G^{(i)}$ $(i \in \{1,\dots,I\})$ with initial terms
$$
(G_0^{(i)},G_1^{(i)})=(v_i,\mu_2u_i+2\mu_1v_i)=(v_i,4u_i+18v_i)
$$
and recurrence relation
$$
G_{n+2}^{(i)}=2\mu_1G_{n+1}^{(i)}-G_n^{(i)}=18G_{n+1}^{(i)}-G_n^{(i)}\ \ \ (n\geq 0),
$$
because $\mu_1+\mu_2\sqrt{5}=9+4\sqrt{5}$. Setting
$$
\alpha=9+4\sqrt{5}\ \ \ \text{and}\ \ \ \beta=9-4\sqrt{5},
$$
we have
\begin{equation}
\label{y}
y=a_i\alpha^n-b_i\beta^n
\end{equation}
for some $n\geq 0$ and $1\leq i\leq I$ with
$$
a_i=\frac{4u_i+(9+4\sqrt{5})v_i}{8\sqrt{5}}\ \ \ \text{and}\ \ \ b_i=\frac{4u_i+(9-4\sqrt{5})v_i}{8\sqrt{5}}.
$$
By \eqref{u} and \eqref{v} we have
\begin{equation}
\label{b}
|b_i|<1.1\sqrt{T}.
\end{equation}
On the other hand, letting
$$
\gamma=\frac{1}{4u_i+(9+4\sqrt{5})v_i}
$$
we have
$$
(16u_i^2+72u_iv_i+v_i^2)\gamma^2-(8u_i+18v_i)\gamma+1=0.
$$
Clearly, $16u_i^2+72u_iv_i+v_i^2=(4u_i+9v_i)^2-80v_i^2\neq 0$. Thus letting
$$
h:=\max(|8u_i+18v_i|,1),
$$
we see that either $|\gamma|\leq 1$, or
$$
|\gamma^2|\leq |16u_i^2+72u_iv_i+v_i^2|\gamma^2\leq h(|\gamma|+1)\leq\frac{h|\gamma^2|}{|\gamma|-1}
$$
whence $|\gamma|\leq h+1$. From this and \eqref{u} and \eqref{v} we get that
$$
|\gamma|<79\ T.
$$
This implies, by $a_i \gamma = \frac {1}{8\sqrt{5}}$,
$$
|a_i|>\frac{1}{1414\ T}.
$$
Combining this with \eqref{y} and \eqref{b} we obtain
$$
y\geq |a_i\alpha^n|-|b_i\beta^n|>\frac{1}{1414T}~(17.9)^n - \sqrt T,
$$
and this exceeds $\frac 12 \sqrt{T}$ if $n>\frac{1.5 \log T + \log 1414}{\log 17.9}$. This implies that $y>\frac{1}{2}\sqrt{T}$ for $n> \log T + 3$. So in view of \eqref{i}, our claim follows by a simple calculation.
\end{proof}

\begin{proof}[Proof of Theorem \ref{thm24} - case $n=5$]
Recall $F_5(A,B)=A^4-3A^2B+B^2$, and for a given positive integer $N$ consider the set
$$
H_N:=\left\{(A,B)\in{\mathbb Z}^2\ :\ 1\leq A\leq \frac{1}{2}N^{\frac{1}{4}},\ 1\leq B\leq \frac{1}{2}N^{\frac{1}{2}}\right\}.
$$
Then for $(A,B)\in H_N$ we have that the corresponding Lucas-sequence (by Lemma \ref{lemdeg}, similarly as in case of $n$ odd) is degenerate at most for $2N^{\frac{1}{4}}$ pairs $(A,B)$, $|F_5(A,B)|<N$ and also that $|H_N|=\frac{1}{4} N^{\frac{3}{4}}$.

We show that for any $k$ with $|k|\leq N$, there are only `few' $(A,B)\in H_N$ such that
\begin{equation}
\label{f5}
F_5(A,B)=k.
\end{equation}
As $4F_5(A,B)=(2B-3A^2)^2-5A^4$, \eqref{f5} can be written as
$$
x^2-5y^2=t
$$
with $x=2B-3A^2$, $y=A^2$ and $t=4k$. So applying Lemma \ref{lem5} with $T=N$, we see that \eqref{f5} has at most $N^{\frac{4}{\log\log N}}$ solutions with $N>N_2(n)$. Hence for $N>N_2(n)$, we have
$$
|{\mathcal L}_5(N)|\geq \frac 15 N^{\frac{3}{4}-\frac{4}{\log\log N}},
$$
and the statement follows.
\end{proof}

\section*{Acknowledgement} We thank Jan-Hendrik Evertse for providing useful references to us.

\end{document}